\pgfplotsset{width=10cm,compat=1.9}
\numberwithin{equation}{section}
\newtheorem{thm}{Theorem}[section]
\newtheorem{lma}[thm]{Lemma}
\newtheorem{defn}[thm]{Definition}
\newtheorem{prop}[thm]{Proposition}
\newtheorem{rem}[thm]{Remark}
\renewcommand{\epsilon}{\varepsilon}
\renewcommand{\geq}{\geqslant}
\renewcommand{\leq}{\leqslant}
\renewcommand{\epsilon}{\varepsilon}
\renewcommand{\geq}{\geqslant}
\renewcommand{\leq}{\leqslant}
\def\@setauthors{%
  \begingroup
  \def\thanks{\protect\thanks@warning}%
  \trivlist
  \centering\footnotesize \@topsep30\p@\relax
  \advance\@topsep by -\baselineskip
  \item\relax
  \author@andify\authors
  \def\\{\protect\linebreak}

  \normalsize\lowercase{\authors}%
  
	\ifx\@empty\contribs
  \else
    ,\penalty-3 \space \@setcontribs
    \@closetoccontribs
  \fi
  \endtrivlist
  \endgroup
}
\def\@settitle{\begin{center}
\LARGE\lowercase{\@title}
  \end{center}%
}
\newcommand{\authoremail}[1]{\email{\href{mailto:#1}{\color{lightblue}{#1}}}}
\newcommand{\authoraddress}[1]{\address{\normalfont{#1}}}
\definecolor{lightblue}{HTML}{2B77A4}
\definecolor{darkred}{HTML}{9E0D0D}
\title{ On the Onsager's energy conservation for the convergence dynamics of the 3D-Leray-$\alpha$ gaseous star model}
\author{ANIS RAHMANI}
\author{ABDELAZIZ MENNOUNI}
\date{}
\begin{document}


\maketitle
\thispagestyle{empty}

\begin{abstract}
This research presents a model that accurately represents the motions of gaseous stars We employ the Navier-Stokes-Poisson system to transform compressible Euler equations into non-compressible ones by combining quasineutral and inviscid conditions. We intend to put Onsager's hypothesis to the test using the Leray-alpha gaseous star model. This conjecture connects energy conservation with the regularity of weak solutions in the Euler equations. The model used in this work functions as an inviscid regularization of the Euler equations. It technically converges to the Euler equations as the regularization length scale $\alpha$ approaches $0^{+}$.
\\ \\ 
\emph{Mathematics Subject Classification}: primary: 35K15, 35K55, 35K65, 35B40.
\\
\emph{Key words and phrases}:   Dynamic combusion; Euler equation; energy conservation; onsager's conjecture; Leray $\alpha$ model.
\end{abstract}

\tableofcontents

\section{Introduction}
Nonlinear partial differential equations are used to explain scientific contexts in several domains (eg.,\cite{1} \cite{2} \cite{3} \cite{4}).
 Navier-Stokes-Poisson system, plays an important role in the framework of preserving mass in cosmic dynamics, carefully governs the development of celestial entities. The dual-species chemical kinetics equations maintain momentum-entropy equilibrium, whereas the Poisson equation defines the gravitational potential $\psi$:
\begin{equation}\label{sys1}
\left\{
\begin{aligned}
&\partial_{t}\varrho + div(\varrho\textbf{u})=0,\\
     &\partial_{t}(\varrho\textbf{u})+div(\varrho\textbf{u}\otimes \textbf{u})+\nabla_{x}p=\mu \Delta \textbf{u}+(\upsilon+\mu )\nabla_{x} div\textbf{u}-\varrho\nabla_{x}\psi,
     \\ 
     &\partial_{t}(\varrho s)+div(\varrho s \textbf{u})+div(\dfrac{\textbf{q}}{\theta})= \sigma,
     \\ 
      &\partial_{t}(\varrho Z)+div(\varrho Z \textbf{u})=w(\varrho,\theta,Z)+div(\mathcal{F}),
      \\ 
     &-\lambda^{2} \Delta \psi=\varrho-1. 
			\end{aligned}
\right.
\end{equation}
The model proposed for gaseous star dynamics is based on the Navier-Stokes-Poisson system formulation, which is specifically intended for compressible and reactive gases. This concept explains the complex interplay between fluid dynamics and thermodynamics in a gaseous star environment.
\par Let s denote the specific entropy, and p stand for pressure. $\textbf{q}$ denotes the heat flux, and $\sigma$ denotes the entropy production rate. The function $w=w(\varrho,\theta,Z)$ represents the rate of nuclear reactions occurring within the stellar core and $\mathcal{F}$ denotes the species diffusion flux. Furthermore, $\lambda$ represents the (scaled) Debye length, while $\mu$ and $\upsilon$ stand as the constant viscosity coefficients in satisfying conditions.
\begin{eqnarray}\label{sys2}
    \mu > 0\;\;\mbox{and}\;\; \upsilon + \dfrac{2}{n} \mu \geq 0.
\end{eqnarray}
Furthermore, we take into account the following initial conditions:
\begin{equation}\label{sys3}
\left\{
\begin{aligned}
      &\varrho(0,x)=\varrho_{0}, \\
      &\textbf{u}(0,x)=\textbf{u}_{0}(x), \\
      &\theta(0,x)=\theta_{0}(x),\\
      &Z(0,x)=Z_{0}(x),\\
\end{aligned}
\right.
\end{equation}
The function $w=w(\varrho,\theta,Z)$, representing the rate of nuclear reactions, can exhibit a broad generality. A concrete example is given by:
$w=K\phi(\theta),\;\;\varrho Z=0.$
The function $w=w(\varrho,\theta,Z)$, represents the rate of nuclear reactions, which can exhibit a broad generality. A concrete example is given by:
$w=K\phi(\theta) \varrho Z=0.$
The inclusion of the parameter $K$ in the fourth chemical kinetics equation indicates its role as the reaction rate parameter. $\phi=\phi(\theta)$ represents the reaction function governed by the Arrhenius kinetics. Also, the reaction function $\phi$ is assumed to be an increasing, Lipschitz continuous function on $[0,\infty)$ satisfying
\begin{equation}\label{sys4}
\left\{
\begin{aligned}
&\phi(\theta)=0,\quad 0\leq \theta \leq \theta_{i}, \\
&\phi(\theta)>0,\quad \theta > \theta_{i},\\
\end{aligned}
\right.
\end{equation}
where $\theta_{i} > 0$ represents the ignition temperature in this situation. This criterion is met, as shown by the Arrhenius function $ \phi(\theta) = e^{-A/\theta}$, $\theta \gg \theta_{i}$. As a result, a fluid particle is initially made up entirely of the reactant. When the ignition temperature is exceeded, combustion occurs, converting a portion or all of the mass into the product species.
\par The rate of heat transfer, shown by the heat flux $\textbf{q}$, is directly related to the difference in temperature across space. It follows the Fourier law and has a heat conductivity coefficient $k \in C^{2}[0,\infty]$ that changes with temperature $\theta $ in the following way:
\begin{equation}\label{sys4}
\left\{
\begin{aligned}
   \mbox{\textbf{Fourier’s law:}}\quad &\textbf{q}=-k\nabla_{x} \theta,\;\; k>0,\\
   & c_{1}(1+\theta^{3}) \leq k(\theta) \leq  c_{2}(1+\theta^{3}).\\  
\end{aligned}
\right.
\end{equation}
Let $\mathcal{F}$ stand for the species' diffusion flux, which has a linear relationship with the spatial gradient of the state variable $Z$. Its expression is given by the equation and follows Fick's law, where $d>0$ is the species' diffusion coefficient:
\begin{eqnarray*}
    \mbox{\textbf{Fick’s law:}}\quad\mathcal{F}=d\nabla_{x} Z.
\end{eqnarray*}
The model analysis focuses on the combined quasineutral and inviscid limit of the Navier-Stokes-Poisson system within the torus $\mathbb{T}^{3}$. This limit helps to explain the system's behavior under quasi-neutrality (where positive and negative charges virtually cancel each other out) and minimal viscosity.
\par We demonstrate that the Navier-Stokes-Poisson system converges to the incompressible Euler equations. We establish convergence for both the global weak solution and the scenario with broad initial data. Such findings lead to a better understanding of the system's behavior and its relationship to the incompressible Euler equations, revealing important information on the dynamics of gaseous stars.
As in \cite{5}, the system $(\ref{sys5})$ is an approach of the incompressible Euler equations of ideal reactive fluids in the unknowns $(\textbf{u},p,Z)$, given by:
\begin{equation}\label{sys5}
\left\{
\begin{aligned}
    &\partial_{t} \textbf{u} + (\textbf{u}\cdot\nabla)\textbf{u} +\nabla p = 0,\quad \quad\quad (x,t)\in \mathbb{T}^{3}\times (0,T)\\
    &\partial_{t}Z + (\textbf{u}\cdot\nabla)Z + K\phi(\Bar{\theta})\Bar{Z}=0,\quad \quad\quad (x,t)\in \mathbb{T}^{3}\times (0,T)\\
&-K\phi(\Bar{\theta})\Bar{Z}=0,\quad \quad\quad (x,t)\in \mathbb{T}^{3}\times (0,T)\\
&\nabla \cdot\textbf{u}=0 \quad \quad\quad (x,t)\in \mathbb{T}^{3}\times (0,T).
\end{aligned}
\right.
\end{equation}
Taking the combined quasineutral and inviscid limit $( \lambda,\mu,d,\upsilon)\rightarrow 0$ and $\varrho=1$
This means that the system $(\ref{sys1})$ is approximated by a system that generalizes the incompressible Navier-Stokes equation for a viscous reacting fluid in the real world. Formally, letting the viscosity parameters $\mu$, $\upsilon\rightarrow0$ and the species diffusion $d \rightarrow0$ yield the system $(\ref{sys5})$
\begin{prop}(\cite{5,22})
    Let $\textbf{u} $ be a divergence-free vector field in $L^{\infty}(([0,T];L^{2}(\mathbb{T}^{3}))$ and $V=(\nabla_{x}q,\nabla_{x}\phi)^{T}$ is an element of $L^{\infty}(([0,T];L^{2}(\mathbb{T}^{3}))$. Then
    \begin{eqnarray*}
        \mathcal{L}(-\dfrac{t}{\epsilon})A^{\epsilon}(t) \rightarrow \Bar{B}(\nu,\nu)\;\;\mbox{weakly}-^{*}\;\;\mbox{in} L^{\infty}(([0,T];W^{-1,1}(\mathbb{T}^{3}))\;\;\mbox{as}\;\;\epsilon \rightarrow 0.
    \end{eqnarray*}
\end{prop}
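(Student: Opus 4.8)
\emph{Proof sketch.} This is a fast-oscillation (acoustic averaging) lemma, and I would prove it by pairing against smooth test functions and exploiting the resonance structure of the group $\mathcal{L}$, so that no a priori compactness of the nonlinear term itself is needed. Since $A^{\epsilon}$ is quadratic in the filtered unknowns and $(\textbf{u},V)$ is bounded in $L^{\infty}([0,T];L^{2}(\mathbb{T}^{3}))$, Hölder's inequality gives $A^{\epsilon}$ bounded in $L^{\infty}([0,T];L^{1}(\mathbb{T}^{3}))$ uniformly in $\epsilon$, which is the only a priori bound the argument consumes. For a smooth $\varphi$ I would write the pairing as $\langle\mathcal{L}(-t/\epsilon)A^{\epsilon}(t),\varphi(t)\rangle=\langle A^{\epsilon}(t),\mathcal{L}(t/\epsilon)\varphi(t)\rangle$, using that $\mathcal{L}(\tau)$ is the unitary group of a skew-adjoint constant-coefficient operator, so $\mathcal{L}(-\tau)^{*}=\mathcal{L}(\tau)$ and $\mathcal{L}$ preserves smoothness.

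Next I would diagonalise $\mathcal{L}$ in the Fourier basis of $\mathbb{T}^{3}$: on the mode $k$ the group acts by the scalar phase $e^{\mathrm{i}t\omega(k)/\epsilon}$ on the oscillatory (gradient/acoustic) components and trivially on the divergence-free component. Expanding the quadratic $A^{\epsilon}$ and the shifted test function in Fourier series, an interaction of modes $j$ and $\ell$ feeding $k=j+\ell$ carries the phase $e^{\mathrm{i}t\Omega(j,\ell)/\epsilon}$ with $\Omega(j,\ell)=\sigma_{1}\omega(j)+\sigma_{2}\omega(\ell)+\sigma_{3}\omega(j+\ell)$, $\sigma_{i}\in\{0,\pm1\}$. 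I would split the sum into the resonant set $\{\Omega(j,\ell)=0\}$ and its complement. On the resonant set the phase is absent, and using the convergence of the filtered unknowns to $\nu$ (the compactness/convergence input of \cite{5,22}) together with Cauchy--Schwarz control of the $j+\ell=k$ sums, these terms pass to the limit and assemble into $\langle\bar B(\nu,\nu),\varphi\rangle$, where $\bar B$ is by definition the bilinear operator obtained by retaining only resonant interactions. On the non-resonant set I would invoke the oscillation lemma $\int_{0}^{T}e^{\mathrm{i}t\Omega/\epsilon}\psi(t)\,dt\to0$ as $\epsilon\to0$ for $\Omega\neq0$, $\psi\in L^{1}(0,T)$, mode by mode, to annihilate the contribution.

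The step I expect to be the main obstacle is making this mode-by-mode dichotomy uniform in $\epsilon$ so that it survives the summation over $k$ and over the interactions $j+\ell=k$: for each fixed triple the argument is elementary, but there are infinitely many. This is exactly why the target is the weak-$*$ topology of the negative-regularity space $W^{-1,1}$ — it suffices to test against a fixed smooth $\varphi$, and then, in the form $\langle A^{\epsilon}(t),\mathcal{L}(t/\epsilon)\varphi(t)\rangle$, one truncates $\varphi$ at spatial frequency $N$ and bounds the remainder by $\|A^{\epsilon}(t)\|_{L^{1}_{x}}\|\mathcal{L}(t/\epsilon)(\varphi-\varphi_{N})(t)\|_{L^{\infty}_{x}}\le\|A^{\epsilon}(t)\|_{L^{1}_{x}}\|(\varphi-\varphi_{N})(t)\|_{H^{s}_{x}}$ with $s>3/2$, using Sobolev embedding and unitarity of $\mathcal{L}$ on $H^{s}$; this is $o_{N}(1)$ uniformly in $\epsilon$ by the $\epsilon$-independent $L^{1}$ bound of the first step, and the finitely many remaining interactions are disposed of by the resonant/non-resonant split. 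A secondary point needing care is the passage from $\nu^{\epsilon}$ to its limit $\nu$ inside the non-resonant oscillatory integrals when only weak convergence of the filtered unknowns is available: one first replaces $\nu^{\epsilon}$ by a smooth frequency truncation using the uniform bound, applies Riemann--Lebesgue to the now-fixed integrand, and then removes the truncation. Finally, that $\bar B$ is a well-defined bounded bilinear operator reduces to a clean geometric description of the resonance relation $\Omega(j,\ell)=0$ on $\mathbb{T}^{3}$, which follows from the explicit form of the acoustic eigenvalues $\omega(k)$.
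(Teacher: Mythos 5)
You should first be aware that the paper offers no proof of this proposition at all: it is imported verbatim from \cite{5} and \cite{22}, and the objects $A^{\epsilon}$, $\mathcal{L}$, $\bar{B}$ and $\nu$ are never even defined in the present text. So there is no internal argument to compare yours against; the only meaningful comparison is with the filtering method of Schochet--Grenier--Masmoudi that underlies the cited results, and your sketch does reproduce that method's skeleton faithfully: duality pairing with $\mathcal{L}(t/\epsilon)\varphi$, Fourier diagonalisation of the acoustic group, resonant versus non-resonant splitting, non-stationary phase on the non-resonant set, and a frequency truncation of the test function justified by the uniform $L^{\infty}_{t}L^{1}_{x}$ bound on the quadratic term together with the negative-regularity target space.

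The genuine gap is in your treatment of the non-resonant oscillatory integrals. Riemann--Lebesgue kills $\int_{0}^{T}e^{\mathrm{i}t\Omega/\epsilon}\psi(t)\,dt$ only for a \emph{fixed} $\psi\in L^{1}(0,T)$; here the amplitude is itself $\epsilon$-dependent (a product of Fourier coefficients of the filtered unknowns), and the stated hypotheses give only an $L^{\infty}_{t}L^{2}_{x}$ bound, which is compatible with amplitudes that themselves oscillate in time at rate $1/\epsilon$ and cancel the phase exactly. Your proposed fix --- replace the amplitude by a smooth truncation and then apply Riemann--Lebesgue ``to the now-fixed integrand'' --- presupposes strong convergence of the amplitudes in time, which is not among the hypotheses and is precisely the point at issue. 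In \cite{22} this is resolved by using the equation satisfied by the filtered variables to obtain a uniform bound on the time derivative of the relevant Fourier coefficients in a negative Sobolev space, hence equicontinuity in time, and only then does the oscillation argument close. Without that input (or some equivalent time-compactness) the mode-by-mode dichotomy does not yield convergence of the non-resonant part, and the limit $\bar{B}(\nu,\nu)$ cannot be identified. A secondary point: this first proposition of the pair carries no strong-convergence hypothesis at all, so even the resonant part needs an argument for why products of merely weakly convergent filtered unknowns pass to the limit; in the cited sources this again comes from the time-compactness supplied by the equation, an ingredient your sketch invokes only implicitly as ``the compactness/convergence input of \cite{5,22}''.
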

\begin{prop}(\cite{5,22})
    Let \textbf{u} be a divergence -free vector fields $L^{r}(0,T;H^{s}({\mathbb{T}^{3}} ))$ and $V=(\nabla_{x}q,\nabla_{x}\phi)^{T}$ is an element of $L^{r}(0,T;H^{s}({\mathbb{T}^{3}} ))$. Let $w^{\epsilon}$ be a sequence of divergence-free vector fields $w^{\epsilon}$ $  L^{p}(0,T;H^{-s}({\mathbb{T}^{3}} ))$ and $W^{\epsilon}=(\nabla_{x}g^{\epsilon},\nabla_{x}\psi^{\epsilon})^{T}\in L^{p}(0,T;H^{-s}({\mathbb{T}^{3}} ))$ such that  $\dfrac{1}{r}+\dfrac{1}{p}=1$. Assume that there exist $w$ and $\Bar{X}=(\nabla_{x}g,\nabla_{x}\psi)^{T}$ such that $w^{\epsilon} $ and $X^{\epsilon}=\mathcal{L}(-\frac{t}{\epsilon})W^{\epsilon}$ converges strongly to $w$ and $\Bar{X} \in   L^{p}(0,T;H^{-s}({\mathbb{T}^{3}} ))$ for all $s'\geq s > \frac{n}{2}+2$, respectively. Then 
    \begin{eqnarray*}
        \mathcal{L}(-\dfrac{t}{\epsilon})A^{\epsilon}(t) \rightarrow \Bar{B}(\Bar{\mathcal{Z
        }},\nu)\;\;\mbox{as}\;\;\epsilon\rightarrow 0.
    \end{eqnarray*}
\end{prop}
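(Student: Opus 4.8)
The plan is to treat this as an averaging statement in the spirit of Schochet's fast singular-limit method, combined with bilinear estimates in high-regularity Sobolev spaces. First I would record the basic properties of the propagator $\mathcal{L}(\tau)$: it is a $C_{0}$-group generated by a skew-adjoint operator $L$, hence an isometry on every $H^{s}(\mathbb{T}^{3})$, so that $\|\mathcal{L}(\pm t/\epsilon)f\|_{H^{s}}=\|f\|_{H^{s}}$ uniformly in $t$ and $\epsilon$; diagonalizing $L$ in the Fourier basis produces real eigenfrequencies $\{\omega_{k}\}_{k\in\mathbb{Z}^{3}}$, so that $\mathcal{L}(\tau)$ multiplies the $k$-th mode by $e^{i\tau\omega_{k}}$.

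Second, I would write $A^{\epsilon}(t)$ as a fixed bilinear expression $B$ evaluated on the ansatz variables, and conjugate by the filter. Setting $\mathcal{Z}^{\epsilon}=\mathcal{L}(-t/\epsilon)(w^{\epsilon},W^{\epsilon})$ and $\nu=(\mathbf{u},V)$, the quantity $\mathcal{L}(-t/\epsilon)A^{\epsilon}(t)$ becomes a sum of terms of the form $\mathcal{L}(-t/\epsilon)B\big(\mathcal{L}(t/\epsilon)a,\,b\big)$ in which at least one argument carries the oscillatory factor. Expanded in Fourier modes, each such term is a series whose $(k,\ell,m)$ entry carries the phase $e^{it(\omega_{k}+\omega_{\ell}-\omega_{m})/\epsilon}$; as $\epsilon\to0$ only the resonant triads $\omega_{k}+\omega_{\ell}=\omega_{m}$ contribute, while the rest are killed by a non-stationary-phase / Riemann--Lebesgue argument. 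Collecting the surviving resonant entries and using that the slow limit $\nu$ is fixed and the oscillatory parts converge to $\bar{\mathcal{Z}}$, the limit is precisely the resonant projection of the bilinear interaction between $\bar{\mathcal{Z}}$ and $\nu$, i.e. $\bar{B}(\bar{\mathcal{Z}},\nu)$.

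Third, to make each limit rigorous I would invoke the bilinear continuity of $B$. Because $s>\tfrac{n}{2}+2$, $H^{s}(\mathbb{T}^{3})$ is a Banach algebra that embeds into $C^{2}$, so $B$ maps $H^{s}\times H^{-s}$ and $H^{-s}\times H^{s}$ continuously into $H^{-s}$ via the standard paraproduct/multiplier estimate; this is exactly why the conjugate exponents $\tfrac{1}{r}+\tfrac{1}{p}=1$ and the threshold on $s$ are imposed. Then $\nu\in L^{r}(0,T;H^{s})$ is fixed, while $\mathcal{Z}^{\epsilon}$ and $X^{\epsilon}=\mathcal{L}(-t/\epsilon)W^{\epsilon}$ converge strongly in $L^{p}(0,T;H^{-s})$ by hypothesis; feeding these into the continuous bilinear form gives convergence of the resonant part of the products in $L^{1}(0,T;H^{-s})$, and the non-resonant remainder, which is uniformly bounded in $\epsilon$, is sent to $0$ by approximating the amplitudes by trigonometric polynomials in $t$ and integrating the oscillatory factor $\int_{0}^{t}e^{i(\omega_{k}+\omega_{\ell}-\omega_{m})\tau/\epsilon}\,d\tau$ by parts.

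The hard part will be upgrading the vanishing of the non-resonant interactions from weak, pointwise-in-frequency convergence to genuine norm convergence in $L^{p}(0,T;H^{-s})$. This requires controlling three things at once: a uniform-in-$\epsilon$ Fourier-tail estimate coming from the $H^{-s}$ bound, an $\epsilon$-uniform modulus of continuity in time that legitimizes the integration by parts, and the near-resonant frequencies where $\omega_{k}+\omega_{\ell}-\omega_{m}$ is nonzero but small. Disposing of this last set cleanly is the delicate step and is where the strict inequality $s>\tfrac{n}{2}+2$ and the slack $s'\ge s$ are genuinely used. Once the non-resonant part is removed, assembling the remaining resonant contributions identifies the limit as $\bar{B}(\bar{\mathcal{Z}},\nu)$, which is the assertion.
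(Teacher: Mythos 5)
The paper itself contains no proof of this proposition: it is imported verbatim from the cited references (Donatelli--Trivisa \cite{5} and Masmoudi \cite{22}), and none of the objects $\mathcal{L}$, $A^{\epsilon}(t)$, $\Bar{B}$, $\Bar{\mathcal{Z}}$, $\nu$ is defined anywhere in the text. So there is no internal argument to compare yours against. That said, your strategy is the right one for the results in those references: Schochet's filtering method, with $\mathcal{L}(\tau)$ an isometry on every $H^{s}$, a Fourier-triad decomposition of the conjugated bilinear form into resonant and non-resonant parts, H\"older in time via $\tfrac{1}{r}+\tfrac{1}{p}=1$, and the algebra property of $H^{s}$ for $s>\tfrac{n}{2}+2$ to make $B$ continuous on the relevant product spaces. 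To that extent the proposal is faithful to the intended proof.

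As a proof, however, it has a genuine gap, and you name it yourself: the passage from pointwise-in-frequency oscillation to norm convergence of the non-resonant part in $L^{p}(0,T;H^{-s})$ is asserted, not established. This is the entire content of the lemma. Concretely, three things are missing. First, the Riemann--Lebesgue/integration-by-parts step requires an $\epsilon$-uniform bound on the time derivative of the filtered amplitudes; this comes from the equation satisfied by $W^{\epsilon}$, not from the strong-convergence hypothesis alone, and you never invoke it. Second, the near-resonant set where $\omega_{k}+\omega_{\ell}-\omega_{m}$ is small but nonzero produces small divisors; handling it requires either a lower bound on nonzero resonance defects (available here because the eigenfrequencies are explicit on $\mathbb{T}^{3}$) or a frequency-truncation argument trading tail smallness (from the $H^{s}$ bound) against the divisor, and neither is carried out. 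Third, your reduction of $\mathcal{L}(-t/\epsilon)A^{\epsilon}(t)$ to terms of the form $\mathcal{L}(-t/\epsilon)B(\mathcal{L}(t/\epsilon)a,b)$ presupposes a specific structure of $A^{\epsilon}$ that you have guessed rather than derived, so even the starting point of the computation is unverified. Until the non-resonant estimate is proved with these ingredients, the argument identifies the expected limit but does not establish the convergence.
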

\begin{rem}\label{proposition 3}
    If  $p=2$ and $n=3$, we obtain 
		$$
		s'\geq s > \frac{7}{2},\;\;\mbox{and}\;\;\textbf{u}\in L^{2}(0,T;H^{s}({\mathbb{T}^{3}} )),\;\; w \in L^{2}(0,T;(H^{-s'}({\mathbb{T}^{3}} )).
		$$
Thus 
$$
\textbf{u} \in L^{\infty}(0,T;L^{2}({\mathbb{T}^{3}} )) \cap  L^{2}(0,T;H^{s}({\mathbb{T}^{3}})),\;\;\mbox{with}\;\; s>\frac{7}{2}.
$$
\end{rem}
Set
$$
\mathcal{V}= \left\{\varphi / \varphi\in \mathcal{D}(\mathbb{T}^{3}), div \varphi = 0\right\} .
 $$
  Let $\textbf{H}$ be the adhesion of $ \mathcal{V}$ in $ L^{2}(\mathbb{T}^{3}) $,
$\textbf{V}_{s}$ adhesion of $ \mathcal{V}$ in $ H^{s}(\mathbb{T}^{3}) $ in particular
$\textbf{V}_{1}= \textbf{V} $ adhesion of $ \mathcal{V}$ in $ H^{1}(\mathbb{T}^{3})$. According to \cite{12}, we have
\begin{eqnarray*}
    \textbf{V}_{s}\subset\textbf{V}\subset \textbf{H},\;\;s>1.
\end{eqnarray*}
Note that 
$$
\textbf{u}\in L^{\infty}(0,T;L^{2}({\mathbb{T}^{3}} )) \cap  L^{2}(0,T;H_{0}^{1}({\mathbb{T}^{3}})).
$$
\section{History of Onsager's Conjecture}
In \cite{11}, Onsager discussed the regularity threshold for a weak solution $u$ to the Euler equations, which is $C_{t}^{0}L^{2}$. This regularity is critical for assuring the complete elimination of the whole energy flux, which facilitates energy conservation. Onsager's first conjecture, which is now a theorem, can be expressed in contemporary mathematical terms as follows:
For $\alpha\in(0,1)$, let $\textbf{X}_{\infty}^{\alpha}=C_{t}^{0}C^{\alpha}$ be the space of continuous functions in time that are $\alpha$-Holder continuous. in space.   Also, define $\textbf{X}_{3}^{\alpha}=L_{t}^{3}B_{3,\infty}^{\alpha}$ the space of functions such that the Besov norm 
\begin{eqnarray}
  ||u||_{B_{3,\infty}^{\alpha}}:= ||u||_{L^{3}} + \sup_{|y|>0} \dfrac{||u(.+y)-u(.)||_{L^{3}}}{|y|^{\alpha}}
\end{eqnarray}
is cubically integrable in time. Onsager outlined this contrast in \cite{11}.
\begin{thm}{(The Onsager Theorem)}
\begin{enumerate}
	\item If $\alpha > \frac{1}{3}$. Then any weak solution $u\in C_{t}^{0}L^{2}\cap \textbf{X}_{\infty}^{\alpha}$ conserves its
kinetic energy. The same is true when  $\textbf{X}_{\infty}^{\alpha} $ is replaced by $\textbf{X}_{3}^{\alpha}$.
\item  If $\alpha\leq \frac{1}{3}$. Then, there exist weak solutions $u\in C_{t}^{0}L^{2}\cap \textbf{X}_{\infty}^{\alpha}$ which
dissipate kinetic energy; i.e., the kinetic energy is a non increasing
function of time. The same is true when $\textbf{X}_{\infty}^{\alpha} $ is replaced by $\textbf{X}_{3}^{\alpha}$.
\end{enumerate}
 \end{thm}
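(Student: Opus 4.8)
The statement has a rigidity half and a flexibility half, and the two are proved by entirely different means; I would treat them separately, the first being elementary and the second resting on the convex-integration machinery, which is where the real difficulty lies.

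\emph{The rigidity part (1).} The plan is the Constantin--E--Titi commutator argument. Fix a standard spatial mollifier $\phi_\varepsilon$ on $\mathbb{T}^3$ and set $u^\varepsilon=u\ast\phi_\varepsilon$. Mollifying the Euler system yields $\partial_t u^\varepsilon+\operatorname{div}(u\otimes u)^\varepsilon+\nabla p^\varepsilon=0$ with $\operatorname{div}u^\varepsilon=0$. Pairing with $u^\varepsilon$ in $L^2(\mathbb{T}^3)$, the pressure term vanishes and $\int u^\varepsilon\otimes u^\varepsilon:\nabla u^\varepsilon\,dx=0$ by incompressibility, so
$$\frac12\frac{d}{dt}\|u^\varepsilon(t)\|_{L^2}^2=\int_{\mathbb{T}^3}\big[(u\otimes u)^\varepsilon-u^\varepsilon\otimes u^\varepsilon\big]:\nabla u^\varepsilon\,dx=:F_\varepsilon(t).$$
Then I would use the commutator identity expressing $(u\otimes u)^\varepsilon-u^\varepsilon\otimes u^\varepsilon$ through the increments $\delta_y u=u(\cdot+y)-u(\cdot)$; since $\textbf{X}_\infty^\alpha$ embeds in $\textbf{X}_3^\alpha$ it is enough to argue under the Besov hypothesis, whence $\|(u\otimes u)^\varepsilon-u^\varepsilon\otimes u^\varepsilon\|_{L^{3/2}}\lesssim\varepsilon^{2\alpha}\|u\|_{B_{3,\infty}^\alpha}^2$ and $\|\nabla u^\varepsilon\|_{L^3}\lesssim\varepsilon^{\alpha-1}\|u\|_{B_{3,\infty}^\alpha}$. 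By H\"older's inequality $|F_\varepsilon(t)|\lesssim\varepsilon^{3\alpha-1}\|u(t)\|_{B_{3,\infty}^\alpha}^3$, which tends to $0$ as $\varepsilon\to0$ when $\alpha>\tfrac13$ and is integrable in time by the $L^3_tB_{3,\infty}^\alpha$ assumption. Integrating over $[0,t]$, letting $\varepsilon\to0$, and using $u\in C^0_tL^2$ to pass $\|u^\varepsilon(t)\|_{L^2}\to\|u(t)\|_{L^2}$ uniformly in $t$, one obtains $\|u(t)\|_{L^2}=\|u(0)\|_{L^2}$ for every $t$, which is energy conservation.

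\emph{The flexibility part (2).} Here the plan is the convex-integration scheme of De Lellis--Sz\'ekelyhidi together with its critical-exponent refinements. One builds inductively a sequence $(u_q,p_q,R_q)$ solving the Euler--Reynolds system $\partial_t u_q+\operatorname{div}(u_q\otimes u_q)+\nabla p_q=\operatorname{div}R_q$, $\operatorname{div}u_q=0$, in which the perturbation $w_{q+1}=u_{q+1}-u_q$ is a fast oscillation at frequency $\lambda_{q+1}$ assembled from Beltrami or Mikado building blocks, tuned so that the low-frequency part of $w_{q+1}\otimes w_{q+1}$ cancels $R_q$ while the new stress $R_{q+1}$---the sum of transport, Nash, and oscillation errors---is driven down to size $\delta_{q+2}$. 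Choosing $\lambda_q$ super-exponentially growing and $\delta_q\sim\lambda_q^{-2\beta}$ with $\beta<\tfrac13$, the $u_q$ converge in $C^0_tC^\beta$ (respectively in an $L^3_tB_{3,\infty}^{\beta}$-type variant) to a weak solution of the Euler equations, and by additionally prescribing the energy profile $t\mapsto\|u_q(t)\|_{L^2}^2$ to track a preassigned strictly decreasing $C^1$ function one forces the limit to dissipate kinetic energy. I expect the inductive step at the endpoint exponent $\tfrac13$ to be the main obstacle: closing the estimate on $R_{q+1}$ without losing the borderline power of $\lambda_{q+1}$ in the oscillation error requires either first gluing the approximate solution on short time intervals or replacing Beltrami flows by Mikado flows with pairwise disjoint supports, and the energy bookkeeping must be carried through this construction without destroying those gains.
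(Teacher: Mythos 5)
Your part (1) is the classical Constantin--E--Titi commutator argument run with a spatial mollifier, and it is correct; the paper establishes the same half by the equivalent Fourier-side route, bounding the flux $\Pi_{\kappa}[u]$ through the sharp cutoff $\mathbb{P}_{\leq\kappa}$ via $\|\mathbb{P}_{\leq\kappa}(u\otimes u)-\mathbb{P}_{\leq\kappa}u\otimes\mathbb{P}_{\leq\kappa}u\|_{L^{3/2}}\lesssim\kappa^{-2\alpha}\|u\|_{B^{\alpha}_{3,\infty}}^{2}$ and $\|\nabla\mathbb{P}_{\leq\kappa}u\|_{L^{3}}\lesssim\kappa^{1-\alpha}\|u\|_{B^{\alpha}_{3,\infty}}$, which yield the same critical exponent $\kappa^{1-3\alpha}$ as your $\varepsilon^{3\alpha-1}$; the two implementations are interchangeable, and the paper additionally records the Duchon--Robert identity $D[u]=\lim_{\kappa\to\infty}\Pi_{\kappa}[u]$, which your mollifier version would recover as the defect term in the local energy balance. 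For part (2) the paper offers no proof at all: it only cites the literature and remarks on sharpness via the Burgers shock, so your De Lellis--Sz\'ekelyhidi/Isett convex-integration outline is the correct route but goes well beyond anything the paper attempts; note, however, that what you give is a roadmap rather than a proof (the inductive estimates on $R_{q+1}$, the gluing step, and the prescription of a decreasing energy profile each require substantial work), and that for the endpoint $\alpha=\tfrac13$ in the H\"older class $\textbf{X}_{\infty}^{1/3}$ the scheme you describe is not known to close --- the statement as printed, which includes $\alpha=\tfrac13$, overstates the established results, since Isett's theorem covers $\alpha<\tfrac13$. You would strengthen the writeup of part (1) by justifying the pairing of the mollified equation with $u^{\varepsilon}$ (i.e.\ that the weak formulation extends to such test functions), which the paper handles in its own setting through Lemma \ref{lemma 3}.
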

Some outcomes of the dichotomy are established in \cite{15} and later in \cite{10}.
\par Denote by 
$$
\mathbb{P}_{\leq\kappa}(u\otimes u)-\mathbb{P}_{\leq\kappa} u\otimes \mathbb{P}_{\leq\kappa} u\;\;\mbox{such that}\;\;\mathbb{P}_{\leq\kappa}
$$
the operator that truncates the Fourier modes, which have an absolute value larger than $\kappa$. That is, 
$$
\mathbb{P}_{\kappa} u(x) = \sum\limits_{k\in \mathbb{Z}^{3},|k|<\kappa}\hat u (x) e^{ i k\cdot x}.
$$
Define the flux density of a periodic vector field $u$ at frequencies of modulus $\kappa$ as a function of $(t, x)$:
\begin{eqnarray*}
\pi_{\kappa}[u]=(\mathbb{P}_{\leq\kappa}(u\otimes u)-\mathbb{P}_{\leq\kappa} u\otimes \mathbb{P}_{\leq\kappa} u):\;\; \nabla\mathbb{P}_{\leq\kappa} u.
\end{eqnarray*}
When referring to matrices $A$ and $B$, the expression $A:\;B$ indicates contraction $\sum\limits_{i,j} A_{ij} B_{ij}$.
Ultimately, we estimate the flow of frequencies with a magnitude of $\kappa$ across time $(t)$
\begin{eqnarray*}
    \Pi_{\kappa}[u]=\int_{\mathbb{T}^{3}}\pi_{\kappa}[u](.,x)dx.
\end{eqnarray*}
Recently, Constantin and Titi established the following bound 
\begin{eqnarray*}
    || (\mathbb{P}_{\leq\kappa}(u\otimes u)-\mathbb{P}_{\leq\kappa} u\otimes \mathbb{P}_{\leq\kappa} u)||_{L^{\frac{3}{2}}} \leq \kappa^{-2\alpha} ||u||_{B_{3,\infty}^{s}}^{2}.
\end{eqnarray*}
This can be combined with
$$
|| \nabla\mathbb{P}_{\leq\kappa} u||_{L^{3}} \leq \kappa^{1-\alpha} ||u||_{B_{3,\infty}^{s}}
$$
to yield
\begin{eqnarray}\label{20}
    |\Pi_{\kappa}[u](t)|\precsim \kappa^{1-3\alpha}||u(t,.)||_{B_{3,\infty}^{s}}^{3}.
\end{eqnarray}
The bound $(\ref{20})$ demonstrates that $u\in L_{t}^{3}B_{3,\infty}^{\alpha}$ with $\alpha > \frac{1}{3}$. The amount $\Pi_{\infty}[u,t',t]$ indicates that the total energy flow of the function u over the time span $[t',t]$ is zero, hence proving the energy equality:
\begin{eqnarray*}
 E(t)-E(t')=-\lim\limits_{\kappa\rightarrow \infty} \int_{t'}^{t}\Pi_{\kappa}[u](s) ds =: -\Pi_{\infty}[u,t',t].
\end{eqnarray*}
The work of Constantin and Titi was improved by Duchon and Robert in \cite{7}, who introduced the energy dissipation measure $D[u]$ as defined earlier in equation $(\ref{40})$. They also established the significant identity $D[u]=\lim\limits_{\kappa \rightarrow\infty} \Pi_{\kappa}[u]$ in the context of distributions, provided that $u\in L_{t}^{3}L^{3}$ is a weak solution of the Euler equations.
\par In \cite{14}, Eyink extensively examines the concept of flux locality. It is known that the threshold regularity in \cite{7} and \cite{16} is sharp because it is found in the 1D Burgers equation, which shows where a classical shock loses energy. It is in $B_{p,\infty}^{\frac{1}{p}}$ for all $p\in [1, \infty]$.and so in particular in $B_{3,\infty}^{\frac{1}{3}}$, but it does not lie in the space $B_{3,c_{0}}^{\frac{1}{3}}$. Lastly, we highlight the work \cite{13} by Shvydkoy, which explores various geometric constraints that guarantee the conservation of energy for the threshold value. $\alpha =\frac{1}{3}$.
Duchon and Robert improved Constantin, E., and Titi's work (cite 3). They came up with the energy dissipation measure $D[u]$, which was first described in $(\ref{40})$. They also found the interesting identity $D[u]=\lim\limits_{\kappa \rightarrow\infty} \Pi_{\kappa}[u]$ for distributions, where $u\in L_{t}^{3}L^{3}$ is a weak solution of the Euler equations.
\section{Preliminaries}
In this section, we are going to recall some basic facts on the Littlewood–Paley theory, the
definition of Besov space and some useful lemmas
\begin{defn}
    Let $\mathcal{S}(R^{d})$ be the space of Schwartz class of rapidly decreasing
functions such that for any $k\in \mathbb{N}$
\begin{eqnarray*}
    \lVert u \rVert_{k,\mathcal{S}}=\sup_{\lvert \alpha \rvert\leq k , x\in \mathbb{R}^{d} } (1+\lvert x \rvert)^{k} \lvert \partial^{\alpha} u(x)\rvert \leq \infty
\end{eqnarray*}
\end{defn}
We choose two nonnegative functions $\chi , \varphi \in \mathcal{S}(\mathbb{R}^{n})$  respectively, support in
\begin{eqnarray*}
    \mathcal{B}=\lbrace \xi\in \mathbb{R}^{n}:\lvert \xi \rvert \leq \frac{4}{3} \rbrace\\
\mathcal{C}=\lbrace \xi\in \mathbb{R}^{n}: \frac{4}{3} \leq \lvert \xi \rvert \leq \frac{8}{3} \rbrace
\end{eqnarray*}
such that
\begin{eqnarray*}
    \chi(\xi)+\sum\limits_{j \geq 0} \varphi(2^{-j}\xi)=1 \;\;\mbox{for all}\;\; \xi\in\mathbb{R}^{n}
    \\
    \sum\limits_{j \in \mathbb{Z}} \varphi(2^{-j}\xi)=1 \;\;\mbox{for all}\;\; \xi\in\mathbb{R}^{n}  \lbrace0\rbrace
\end{eqnarray*}
Setting $ \varphi_{j}= \varphi(2^{-j}\xi)$ then
\begin{eqnarray*}
    \begin{cases}
          supp ~\varphi_{j} \cap   supp ~\varphi_{j'}= \phi \;\;\mbox{if}\;\; \lvert j-j' \rvert \geq 2
          \\
           supp ~\chi \cap   supp ~\varphi_{j}= \phi \;\;\mbox{if}\;\; j \geq 1
    \end{cases}
\end{eqnarray*}
Let 
$h=\mathcal{F}^{-1}\varphi$ and $\tilde{h}=\mathcal{F}^{-1} \chi$ . Define the frequency localization operators
\begin{eqnarray*}
    \Delta_{j} u =0 \;\;\mbox{for}\;\; j \leq -2 ~~~~~~~~~~~~~~~ \Delta_{-1}u=S_{0}u=\chi(D)u
    \\
    \Delta_{j} u = \varphi(2^{-j}D)u=2^{nj}\int_{\mathbb{R}^{n}}h(2^{j}y)u(x-y)dy\;\;\mbox{for}\;\; j \geq 0
    \\
    S_{j}u=\chi(2^{-j}D)u=\sum\limits_{-1 \leq k \leq j-1} \Delta_{k}u\int_{\mathbb{R}^{n}}\tilde{h}(2^{j}y)u(x-y)dy
\end{eqnarray*}
Informally $\Delta_{j}=S_{j+1}-S_{j}$ is a frequency projection to the annulus $\lvert \xi \rvert \approx 2^{j}$ , while $S_{j}$ is
the frequency projection to the ball $\lvert \xi \rvert \precsim 2^{j}$. One easily verifies that with the above choice
of $\varphi$
\begin{eqnarray*}
    \begin{cases}
        \Delta_{j}\Delta_{k}u \equiv 0 \;\;\mbox{if}\;\; \lvert j-k\rvert\geq 2
        \\
         \Delta_{j}(S_{k-1}u \Delta_{k}u)\equiv 0 \;\;\mbox{if}\;\; \lvert j-k\rvert\geq 5
    \end{cases}
\end{eqnarray*}
We revisit Bony’s paraproduct decomposition. Let $u$ and $v$ be two temperate distributions,
the paraproducts between $u$ and $v$ are defined by
\begin{eqnarray*}
\begin{cases}
     T_{u}v:=\sum\limits_{j} S_{j-1}u \Delta_{j}v
     \\
        T_{v}u:=\sum\limits_{j} S_{j-1}v \Delta_{j}u
\end{cases}
\end{eqnarray*}
Define the remainder of the paraproduct $R(u,v)$ as
\begin{eqnarray*}
    R(u,v):=\sum\limits_{\lvert j-j^{'}\rvert\leq 1} \Delta_{j}u \Delta_{j'}v
\end{eqnarray*}
Then, we have the following Bony’s decomposition:
\begin{eqnarray*}
    uv=T_{u}v+T_{v}u+R(u,v)
\end{eqnarray*}
We shall sometimes also use the following simplified decomposition
\begin{eqnarray*}
    uv=T_{u}v+T^{'}_{v}u \;\;\mbox{ with } \;\; T^{'}_{v}u =T_{v}u+R(u,v)= \sum\limits_{j} S_{j+2} v \Delta_{j}u
\end{eqnarray*}
Now we introduce the definition of inhomogeneous Besov spaces by means of the
Littlewood–Paley projection $\Delta_{j}$ and $S_{j}$
\begin{defn}
    Let $r \in \mathbb{R} ,~~1 \leq p,q \leq \infty$ , the inhomogeneous Besov space
    \begin{eqnarray*}
        B_{p,q}^{r}(\mathbb{R}^{d}):= \lbrace u \in \mathcal{S'}(R^{n}): \lVert u \rVert_{B_{p,q}^{r}}\leq \infty \rbrace
    \end{eqnarray*}
\end{defn}
where
\begin{eqnarray*}
    \lVert u \rVert_{B_{p,q}^{r}}= \begin{cases}
        \Big(\sum\limits_{j=-1}^{\infty} 2^{jrq} \lVert \Delta_{j} u \rVert_{L^{p}}^{q}\Big)^{\frac{1}{q}}\;\;\mbox{for} \;\; q \leq \infty
        \\
      \sup\limits_{j\geq -1}  2^{js}\lVert \Delta_{j} u\rVert_{L^{p}}\;\;\mbox{for} \;\; q = \infty
    \end{cases}
\end{eqnarray*}
\begin{defn}
We'll employ the subsequent inequality for functions within Besov spaces. for more details see  \cite{10}
\begin{eqnarray*}
     \lVert f(.+\xi)-f(.)||_{L^{p}}\leq C |\xi|^{r}||f||_{B_{p,\infty}^{r}}
\end{eqnarray*}
    which holds for $1\leq p \leq \infty $ , $r>0$ and some constant C \label{def3.3}
\end{defn}
\section{Convergence Dynamics of 3D-Leray  Gaseous stars model}
The model system for viscous incompressible fluids, known as the 3D convergence Dynamics of Gaseous stars, is represented by a specific shape:
\begin{equation*}
\left\{
\begin{aligned}
    &\partial_{t} \textbf{u} + (\textbf{u}\cdot\nabla)\textbf{u} -\nu\Delta\textbf{u}+\nabla p = 0,\quad \quad\quad (x,t)\in \mathbb{T}^{3}\times (0,T)\\
    &\partial_{t}Z + (\textbf{u}\cdot\nabla)Z + K\phi(\Bar{\theta})\Bar{Z}=0,\quad \quad\quad(x,t)\in \mathbb{T}^{3}\times (0,T)\\
&K\phi(\Bar{\theta})\Bar{Z}=0,\quad \quad\quad (x,t)\in \mathbb{T}^{3}\times (0,T)\\
&\nabla \cdot\textbf{u}=0 \quad \quad\quad (x,t)\in \mathbb{T}^{3}\times (0,T)
\end{aligned}
\right.
\end{equation*}
where $\textbf{u}=(u_{1}(x,t),u_{2}(x,t),u_{3}(x,t))$ is the unknown velocity field of a fluid pattern at point $x$ and at time $t$, $p(x,t)$ is the unknown pressure, and $Z(x,t)$ is the mass fraction of the reactant, $\theta(x,t)$ is the temperature, and the positive parameter $\nu$ is the kinematic viscosity of the fluid.
\par Leray regularization in \cite{9} is employed to explain the 3D convergence dynamics of a model system for gaseous stars that has time-varying boundary conditions and viscous fluids that are incompressible.
Assume that the 3-dimensional torus is represented by $x=(x_{1},x_{2},x_{3})\in \mathbb{T}^{3} := ([-\pi,\pi]\big{|}_{ \{\-\pi,\pi \}\ })^{3}$. Then, all functions in each $x_{i}, i=1,2,3$ have a period of $2\pi. $ The vector fields $v=v(x,t)=(v_{1},v_{2},v_{3})$ or $\textbf{u}=(u_{1},u_{2},u_{3})$, as well as the scalar function $p=p(x,t)$, are the unknown functions.
\begin{equation*}
\left\{
\begin{aligned}
    &\partial_{t} \textbf{u} + (v\cdot\nabla)\textbf{u} -\nu\Delta\textbf{u}+\nabla p = 0, 
\quad \quad\quad (x,t)\in \mathbb{T}^{3}\times (0,T)  \\
&\partial_{t}Z + \nabla \cdot ( Z \textbf{u}) +K\phi(\Bar{\theta})\Bar{Z}=0, 
\quad \quad\quad (x,t)\in \mathbb{T}^{3}\times (0,T)\\
&-K\phi(\Bar{\theta})\Bar{Z}=0,
\quad \quad\quad (x,t)\in \mathbb{T}^{3}\times (0,T)\\
 &\textbf{u}= v-\alpha^{2}\Delta v, \nabla \cdot \textbf{u}= \nabla \cdot v =\nabla \cdot Z=0,\quad \quad\quad (x,t)\in \mathbb{T}^{3}\times (0,T)
\end{aligned}
\right.
\end{equation*}
A specific smoothing kernel, the Green function related to the Helmholtz operator, was studied.
The Helmholtz regularization of $\textbf{u}$ is
$$
\textbf{u}=(1-\alpha^{2}\Delta)v.
$$
The fixed positive parameter $\alpha$ is called the model's sub-grid (filter) length scale. The function $\textbf{u}=v$ yields the exact equations for the $3D$ convergence dynamics of the gaseous star model for $\alpha =0$.
\par In the end, this work examines the inviscid variant of the viscous model:
\begin{equation}\label{sys9} 
\left\{
\begin{aligned}
 &\partial_{t} \textbf{u} + (v\cdot\nabla)\textbf{u} +\nabla p = 0,\quad\quad\quad(x,t)\in \mathbb{T}^{3}\times (0,T)\\
&\partial_{t}Z + \nabla \cdot ( Z \textbf{u}) +K\phi(\Bar{\theta})\Bar{Z}=0,\quad\quad\quad(x,t)\in \mathbb{T}^{3}\times (0,T)\\
&-K\phi(\Bar{\theta})\Bar{Z}=0,\quad\quad\quad (x,t)\in \mathbb{T}^{3}\times (0,T)\\
&\textbf{u}= v-\alpha^{2}\Delta v,\; \nabla \cdot \textbf{u}= \nabla \cdot v =\nabla \cdot Z=0.\quad\quad\quad (x,t)\in \mathbb{T}^{3}\times (0,T)
\end{aligned}
\right.
\end{equation}
\section{Distributional solutions}
\begin{defn}
A weak solution of the Leray-alpha -$\alpha$ gaseous stars model over the domain $\mathbb{T}^{3}\times (0,T)$ is given a velocity field 
$\textbf{u}=(u_{1},u_{2},u_{3}): \mathbb{T}^{3}\times(0,T)\rightarrow \mathbb{R}^{3}$ and a pressure $p:\mathbb{T}^{3}\times(0,T)\rightarrow \mathbb{R}$
is a triplet $ \textbf{u}\in L^{\infty}(0,T;(L^{2}(\mathbb{T}^{3}))),p\in L^{\infty}(0,T;L^{1}(\mathbb{T}^{3})),Z \in L^{\infty}(0,T;L^{2}(\mathbb{T}^{3})) $ if for all $ \varphi \in \mathcal{D}(\mathbb{T}^{3}\times(0,T);\mathbb{R}^{3})
$         
and
$ \chi_{1},\chi_{2} \in \mathcal{D}(\mathbb{T}^{3}\times(0,T);\mathbb{R})$ the following equations holds
\begin{eqnarray}\label{sys10}
\begin{aligned}
&\int_{0}^{T} \int_{\mathbb{T}^{3}} \textbf{u}\cdot \partial_{t}\varphi dx dt +\int_{0}^{T} \int_{\mathbb{T}^{3}} \textbf{u}_{j} v_{i} \partial_{i} \varphi_{j}dxdt+\int_{0}^{T} \int_{\mathbb{T}^{3}} p \nabla\cdot\varphi dx dt=0,\\
&\int_{0}^{T} \int_{\mathbb{T}^{3}} Z\cdot \partial_{t}\chi_{1}  dx dt +\int_{0}^{T} \int_{\mathbb{T}^{3}} Z\textbf{u}\cdot\nabla \chi_{1} dxdt+\int_{0}^{T} \int_{\mathbb{T}^{3}} (K\phi(\Bar{\theta})\Bar{Z} )\chi_{1} dx dt=0,\\
&\int_{0}^{T} \int_{\mathbb{T}^{3}} v_{i}\partial_{i}\chi_{2} dx dt =\int_{0}^{T} \int_{\mathbb{T}^{3}} Z_{i}\partial_{i}\chi_{2} dxdt=0
\end{aligned}
\end{eqnarray}
\end{defn}
\begin{prop}(\cite{7})\label{prop 9}
Let $\textbf{u}= (u_{1},u_{2},u_{3}) \in L^{\infty}(0,T;L^{2} (\mathbb{T}^{3})) \cap  L^{2}(0,T;H_{0}^{1} (\mathbb{T}^{3}))$. The 
Sobolev inclusion  $ H_{0}^{1} (\mathbb{T}^{3}) \subset L^{6}  (\mathbb{T}^{3})$  gives  
    $\textbf{u} \in L^{\infty}(0,T;L^{2}({\mathbb{T}^{3}} )) \cap  L^{2}(0,T;L^{6}
({\mathbb{T}^{3}})) $. Furthermore, H\"older's inequality provides that $ u \in  L^{3}([0,T];L^{3}(\mathbb{T}^{3}))$.  ,
Also 
 $ Z \in  L^{3}([0,T];L^{3}(\mathbb{T}^{3}))$. 
\end{prop}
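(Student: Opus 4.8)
The statement is a routine interpolation argument built on the Ladyzhenskaya-type chain of embeddings in dimension three, and I would organize it in four short steps. First, since $n=3$, the critical Sobolev exponent for $H^1$ is $6$, so $H_0^1(\mathbb{T}^3)\hookrightarrow L^6(\mathbb{T}^3)$ continuously; applying this embedding pointwise in time to $\textbf{u}(\cdot,t)$ and integrating the square over $[0,T]$ turns the hypothesis $\textbf{u}\in L^2(0,T;H_0^1(\mathbb{T}^3))$ into $\textbf{u}\in L^2(0,T;L^6(\mathbb{T}^3))$. Together with the given $\textbf{u}\in L^\infty(0,T;L^2(\mathbb{T}^3))$ this yields the first assertion.

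Second, I would interpolate in the space variable between $L^2$ and $L^6$. Solving $\tfrac13=\tfrac{\theta}{2}+\tfrac{1-\theta}{6}$ gives $\theta=\tfrac12$, so by Hölder's inequality
\begin{equation*}
\lVert \textbf{u}(\cdot,t)\rVert_{L^3(\mathbb{T}^3)}\le \lVert \textbf{u}(\cdot,t)\rVert_{L^2(\mathbb{T}^3)}^{1/2}\,\lVert \textbf{u}(\cdot,t)\rVert_{L^6(\mathbb{T}^3)}^{1/2}\qquad\text{for a.e. }t\in(0,T).
\end{equation*}
Third, I would raise this to the third power, integrate in time, pull the essentially bounded factor out, and apply Hölder in time with conjugate exponents $\tfrac43$ and $4$ (here the finite length of $[0,T]$ is used):
\begin{equation*}
\int_0^T \lVert \textbf{u}\rVert_{L^3}^3\,dt \le \lVert \textbf{u}\rVert_{L^\infty_t L^2_x}^{3/2}\int_0^T \lVert \textbf{u}\rVert_{L^6}^{3/2}\,dt \le T^{1/4}\,\lVert \textbf{u}\rVert_{L^\infty_t L^2_x}^{3/2}\,\lVert \textbf{u}\rVert_{L^2_t L^6_x}^{3/2}<\infty,
\end{equation*}
which is exactly $\textbf{u}\in L^3([0,T];L^3(\mathbb{T}^3))$. (Equivalently one could cite the Ladyzhenskaya inequality $L^\infty_tL^2_x\cap L^2_tH^1_x\hookrightarrow L^{10/3}_{t,x}$ and then use that $\mathbb{T}^3\times[0,T]$ has finite measure so $L^{10/3}\hookrightarrow L^3$; the elementary computation above is more self-contained.)

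Finally, for the reactant $Z$, I would note that the same three steps apply verbatim once one has the bound $Z\in L^\infty(0,T;L^2(\mathbb{T}^3))\cap L^2(0,T;H_0^1(\mathbb{T}^3))$; this is the one place where a little care is needed, since the definition of weak solution records only $Z\in L^\infty(0,T;L^2(\mathbb{T}^3))$, so the missing $L^2_tH^1_x$ control must be obtained from the energy estimate for the transport–reaction equation satisfied by $Z$ (multiplying the $Z$-equation by $Z$, using $\nabla\cdot\textbf{u}=0$ and the sign/Lipschitz structure of $\phi$ in $(\ref{sys4})$, and integrating in time). Granting that a priori bound, the interpolation argument gives $Z\in L^3([0,T];L^3(\mathbb{T}^3))$ as well. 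The main obstacle is therefore not the interpolation, which is textbook, but justifying the parabolic-type regularity $Z\in L^2_tH^1_x$ from the model; everything else is a direct application of Sobolev embedding and Hölder's inequality on the bounded space–time cylinder.
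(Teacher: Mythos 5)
Your argument for $\textbf{u}$ is correct and is exactly the route the paper intends (and essentially writes into the statement itself, with no further proof given): Sobolev embedding $H_0^1(\mathbb{T}^3)\hookrightarrow L^6(\mathbb{T}^3)$ applied pointwise in time, the interpolation $\lVert \textbf{u}\rVert_{L^3}\le \lVert \textbf{u}\rVert_{L^2}^{1/2}\lVert \textbf{u}\rVert_{L^6}^{1/2}$, and H\"older in time on the finite interval. Your exponent bookkeeping ($\theta=\tfrac12$, conjugate pair $\tfrac43$ and $4$, the factor $T^{1/4}$) is all correct, and your parenthetical remark that one lands in $L^4_tL^3_x$ before using finiteness of $T$ is a point the paper glosses over.

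The one genuine problem is in your repair of the $Z$ claim. You are right that the definition of weak solution records only $Z\in L^\infty(0,T;L^2(\mathbb{T}^3))$, so the interpolation machine cannot be started for $Z$ without an additional $L^2_tH^1_x$ bound. But the fix you propose --- multiplying the $Z$-equation by $Z$ and integrating --- cannot produce that bound in this model: the system under consideration is the \emph{inviscid} limit in which the species diffusion coefficient $d$ has already been sent to $0$, so the $Z$-equation $\partial_t Z+\nabla\cdot(Z\textbf{u})+K\phi(\bar\theta)\bar Z=0$ is a pure transport--reaction equation with no $\Delta Z$ term. The energy identity it yields controls only $\lVert Z(t)\rVert_{L^2}$, not $\int_0^T\lVert\nabla Z\rVert_{L^2}^2\,dt$, so no gradient information is generated and the chain $L^\infty_tL^2_x\cap L^2_tH^1_x\Rightarrow L^3_{t,x}$ does not apply to $Z$. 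The paper itself offers no justification either: the assertion $Z\in L^3([0,T];L^3(\mathbb{T}^3))$ is simply stated here and then re-imposed as a standing hypothesis in Section 6 (and is implied by the stronger Besov assumption $Z\in L^3_tB^r_{3,\infty}$ in Proposition \ref{prop 13}). So you have correctly located the gap, but the proposed patch would fail; the honest resolution is to treat $Z\in L^3L^3$ as an assumption rather than a consequence of the $L^\infty_tL^2_x$ regularity.
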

 \begin{lma}\label{lemma 3}
The problem (\ref{sys10}) applies to test functions $\varphi$ , $\chi_{1} \in  W_{0}^{1,1}((0,T);L^{2}(\mathbb{T}^{3}))\cap L^{1}((0,T);H^{3}(\mathbb{T}^{3}))$ 
    \begin{proof}
Here, we approach \cite{9} to prove this Lemma. For every $\varphi\in  W_{0}^{1,1}((0,T);L^{2}(\mathbb{T}^{3}))\cap L^{1}((0,T);H^{3}(\mathbb{T}^{3}))$
there exists a sequence of test function $\varphi_{m}\in \mathcal{D}(\mathbb{T}^{3}\times(0,T))$ that converges to $\varphi$.
\par It is important to note that the problem $(\ref{sys10})$ is valid for any $\varphi_{m}$, as it is contained in $\mathcal{D}(\mathbb{T}^{3}\times(0,T)). $
        Now, note that this problem holds for any $\varphi_{m}$, since they lie in $\mathcal{D}(\mathbb{T}^{3}\times(0,T))$.     
        Also, we observe that 
        $$
				\textbf{u}\partial_{t}\varphi_{m}\rightarrow \textbf{u}\partial_{t}\varphi\;\;\mbox{ and}\;\; Z\partial_{t}\varphi_{m}\rightarrow Z\partial_{t}\varphi \;\;\mbox{in}\;\; L^{1}((0,T)\times \mathbb{T}^{3})\;\;\mbox{as}\;\;m\rightarrow \infty.
				$$
				Thus,
\begin{equation*}
\begin{aligned}
            &\int_{0}^{T} \int_{\mathbb{T}^{3}}\textbf{u}\cdot \partial_{t}\varphi_{m} dxdt \underset{m \rightarrow \infty}{\longrightarrow}\int_{0}^{T}\int_{\mathbb{T}^{3}}\textbf{u}\cdot \partial_{t}\varphi dxdt,\\
            &\int_{0}^{T} \int_{\mathbb{T}^{3}} Z \partial_{t}\varphi_{m} dxdt \underset{m \rightarrow \infty}{\longrightarrow}\int_{0}^{T}\int_{\mathbb{T}^{3}} Z \partial_{t}\varphi dxdt.\\
\end{aligned}
\end{equation*}
        Similarly, we have
        $$
				\textbf{u}_{j}v_{i}\partial_{i}(\varphi_{m})_{j}\rightarrow \textbf{u}_{j}v_{i}\partial_{i}\varphi_{j} \;\;\mbox{and}\;\;Z\textbf{u}\cdot \nabla\varphi_{m}\rightarrow Z\textbf{u}\cdot \nabla\varphi \;\;\mbox{in}\;\;L^{1}((0,T)\times \mathbb{T}^{3})\;\;\mbox{as}\;\;m\rightarrow \infty,
				$$
        which means that
\begin{equation*}
\begin{aligned}
            &\int_{0}^{T} \int_{\mathbb{T}^{3}}\textbf{u}_{j}v_{i}\partial_{i}(\varphi_{m})_{j} dxdt \underset{m \rightarrow \infty}{\longrightarrow}\int_{0}^{T} \int_{\mathbb{T}^{3}}\textbf{u}_{j}v_{i}\partial_{i}\varphi_{j} dxdt\\
            &\int_{0}^{T} \int_{\mathbb{T}^{3}} Z\textbf{u}\cdot \nabla\varphi_{m} dxdt \underset{m \rightarrow \infty}{\longrightarrow}\int_{0}^{T} \int_{\mathbb{T}^{3}} Z\textbf{u}\cdot \nabla\varphi dxdt.
\end{aligned}
\end{equation*} 
Finally, it is important to note that we have made a distinct assumption regarding the pressure's regularity $p\in L^{\infty}(0,T;L^{2}(\mathbb{T}^{3}))$.
Therefore, 
$$
p\partial_{i}\varphi_{m}\rightarrow p\partial_{i}\varphi \;\;\mbox{in}\;\;L^{1}((0,T)\times \mathbb{T}^{3})\;\;\mbox{as}\;\;m\rightarrow \infty.
$$
\begin{eqnarray}
    \int_{0}^{T} \int_{\mathbb{T}^{3}} p\partial_{i}\varphi_{m} dxdt \underset{m \rightarrow \infty}{\longrightarrow}\int_{0}^{T} \int_{\mathbb{T}^{3}} p\partial_{i}\varphi dxdt.
\end{eqnarray}
The convergence of the other terms operates in an identical manner, leading us to conclude proof.
 To fix the constant in the pressure definition, we require that:
  $$
	\int_{\mathbb{T}^{3}} p(x,t) dx =0.
	$$ 
\end{proof}
We introduce $\varphi\in C_{c}^{\infty} (\mathbb{R}^{3};\mathbb{R}) $ radial standard $C_{c}^{\infty}$ mollifier with 
the property that $\int_{\mathbb{R}^{3} }\varphi (x) dx=1$  we define
\begin{eqnarray*}
    \varphi_{\epsilon}(x) := \frac{1}{\epsilon^{3}}\varphi(\frac{x}{\epsilon})
\end{eqnarray*}
Moreover, we introduce the notation
\begin{eqnarray*}
    u^{\epsilon}=u\ast \varphi_{\epsilon}
\end{eqnarray*}
Throughout the paper we will be using the Einstein summation convention
\begin{eqnarray*}
    u_{i}u_{i}=\sum\limits_{j} u_{i}u_{i}
\end{eqnarray*}

\end{lma}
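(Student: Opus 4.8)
The plan is to argue by density. The three identities in (\ref{sys10}) are already known for test fields in $\mathcal{D}(\mathbb{T}^{3}\times(0,T))$, and since each integral appearing there is a bounded multilinear form in the test field, it suffices to show that $\mathcal{D}(\mathbb{T}^{3}\times(0,T))$ is dense in $X:=W_{0}^{1,1}((0,T);L^{2}(\mathbb{T}^{3}))\cap L^{1}((0,T);H^{3}(\mathbb{T}^{3}))$ and that every term of (\ref{sys10}) is continuous with respect to the norm of $X$. For the density I would take $\varphi\in X$, mollify in the space variable to gain the $H^{3}$-regularity, then truncate $\varphi$ in two-sided neighbourhoods of $t=0$ and $t=T$ and mollify in time; the hypothesis $\varphi\in W_{0}^{1,1}$, i.e. vanishing time-trace at the endpoints, is exactly what makes the temporal truncation converge in $W^{1,1}((0,T);L^{2})$. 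This yields $\varphi_{m}\in\mathcal{D}(\mathbb{T}^{3}\times(0,T))$ with $\varphi_{m}\to\varphi$ in $X$, and similarly $\chi_{1,m}\to\chi_{1}$ and $\chi_{2,m}\to\chi_{2}$.

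Next I would record the regularity of the functions multiplying the test field. From $\textbf{u}=v-\alpha^{2}\Delta v$ with $\textbf{u}\in L^{\infty}(0,T;L^{2})$, elliptic regularity on the torus gives $v\in L^{\infty}(0,T;H^{2})\hookrightarrow L^{\infty}(0,T;L^{\infty})$, hence $\textbf{u}_{j}v_{i}\in L^{\infty}(0,T;L^{2})$; from $\textbf{u},Z\in L^{\infty}(0,T;L^{2})$ one gets $Z\textbf{u}\in L^{\infty}(0,T;L^{1})$; by the strengthened pressure hypothesis recorded in Lemma~\ref{lemma 3}, $p\in L^{\infty}(0,T;L^{2})$; and the reaction term $K\phi(\bar\theta)\bar Z$ lies in $L^{\infty}(0,T;L^{2})$ under the standing hypotheses on $\phi$ and $\bar Z$. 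With these in hand, I would pass to the limit in each term of (\ref{sys10}) by replacing $\varphi$ with $\varphi_{m}-\varphi$ (resp. $\chi_{1}$ with $\chi_{1,m}-\chi_{1}$) and estimating by H\"older in $x$ then in $t$: the inertial term by $\|\textbf{u}\|_{L^{\infty}_{t}L^{2}_{x}}\|\partial_{t}(\varphi_{m}-\varphi)\|_{L^{1}_{t}L^{2}_{x}}$; the convective term of the first equation by $\|\textbf{u}v\|_{L^{\infty}_{t}L^{2}_{x}}\|\nabla(\varphi_{m}-\varphi)\|_{L^{1}_{t}L^{2}_{x}}$; the pressure term by $\|p\|_{L^{\infty}_{t}L^{2}_{x}}\|\nabla\!\cdot(\varphi_{m}-\varphi)\|_{L^{1}_{t}L^{2}_{x}}$; the $\partial_{t}$ and reaction terms of the $Z$-equation by pairing $\|Z\|_{L^{\infty}_{t}L^{2}_{x}}$ and $\|K\phi(\bar\theta)\bar Z\|_{L^{\infty}_{t}L^{2}_{x}}$ against $L^{1}_{t}L^{2}_{x}$-norms of $\chi_{1,m}-\chi_{1}$; and the transport term $Z\textbf{u}\cdot\nabla\chi_{1}$ by $\|Z\textbf{u}\|_{L^{\infty}_{t}L^{1}_{x}}\|\nabla(\chi_{1,m}-\chi_{1})\|_{L^{1}_{t}L^{\infty}_{x}}$, using that in dimension three $H^{3}\hookrightarrow W^{1,\infty}$, so that $\chi_{1,m}\to\chi_{1}$ in $L^{1}((0,T);H^{3})$ forces $\nabla\chi_{1,m}\to\nabla\chi_{1}$ in $L^{1}((0,T);L^{\infty})$. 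The third, linear identity pairing $v$ and $Z$ against $\nabla\chi_{2}$ is handled in the same way. Every one of these bounds tends to $0$ by the first step, so (\ref{sys10}) survives the limit.

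I expect the main obstacle — and the reason the admissible test class is taken as large as $L^{1}((0,T);H^{3})$ rather than, say, $L^{1}((0,T);H^{1})$ — to be the nonlinear transport term $\int_{0}^{T}\!\!\int_{\mathbb{T}^{3}}Z\textbf{u}\cdot\nabla\chi_{1}\,dx\,dt$ in the $Z$-equation: a priori the product $Z\textbf{u}$ is only $L^{1}$ in space, so one genuinely needs $\nabla\chi_{1}$ bounded in space, which in $\mathbb{R}^{3}$ costs exactly the three derivatives furnished by $H^{3}$. A secondary point is that the passage to the limit in the pressure term works only because of the reinforced assumption $p\in L^{\infty}(0,T;L^{2})$ flagged in the lemma; with the weaker $p\in L^{\infty}(0,T;L^{1})$ from the definition of weak solution one could not pair $p$ against $\nabla\!\cdot\chi\in L^{1}((0,T);L^{2})$, which is why that extra hypothesis is made explicit. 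Everything else reduces to routine applications of dominated convergence.
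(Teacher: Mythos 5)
Your proposal is correct and follows essentially the same route as the paper: approximate the test field by elements of $\mathcal{D}(\mathbb{T}^{3}\times(0,T))$ and pass to the limit term by term in (\ref{sys10}). In fact your version supplies the justifications the paper only asserts — the explicit H\"older pairings, the elliptic regularity $v=(1-\alpha^{2}\Delta)^{-1}\textbf{u}\in L^{\infty}_{t}H^{2}_{x}\hookrightarrow L^{\infty}_{t}L^{\infty}_{x}$, the embedding $H^{3}(\mathbb{T}^{3})\hookrightarrow W^{1,\infty}$ that explains why the class $L^{1}((0,T);H^{3})$ is needed for the transport term, and the role of the strengthened pressure assumption — so it is, if anything, a more complete account of the same argument.
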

 \section{Energy balance equation}
 Let $(\textbf{u},Z)$ be a weak solution of the convergence dynamics of Leray -$\alpha$ Gaseous stars model such that 
 \begin{eqnarray*}
     \textbf{u}, Z \in L^{3}([0,T];L^{3}(\mathbb{T}^{3})). 
 \end{eqnarray*}
Set
\begin{equation}\label{40}
\begin{aligned}
     D_{1}(v,\textbf{u})(x,t)&:=\lim\limits_{\epsilon\rightarrow 0}   \dfrac{1}{2}\int_{\mathbb{R}^{3}}\nabla_{\xi}\chi_{\epsilon} (\xi)\cdot\delta v(\xi;x,t)|\delta \textbf{u}(\xi;x,t)|^{2}d\xi 
      \\
      & =-\frac{1}{2} \partial_{i}(v_{i}\textbf{u}_{j}\textbf{u}_{j})^{\epsilon} + \frac{1}{2} v_{i} \partial_{i}(\textbf{u}_{j}\textbf{u}_{j})^{\epsilon}+\textbf{u}_{j}\partial_{i}(\textbf{u}_{j}v_{i})^{\epsilon}-v_{i}\textbf{u}_{j}\partial_{i}\textbf{u}_{j}^{\epsilon},
\end{aligned}
\end{equation}
\begin{equation*}
\begin{aligned}
    D_{2}(\textbf{u},Z)(x,t)& :=\lim\limits_{\epsilon\rightarrow 0} \dfrac{1}{2}\int_{\mathbb{R}^{3}}\nabla_{\xi}\chi_{\epsilon} (\xi)\cdot\delta \textbf{u}(\xi;x,t) (|\delta Z(\xi;x,t) |^{2}d\xi\\
    & =-\frac{1}{2}\nabla\cdot(Z^{2}\textbf{u})^{\epsilon} + \frac{1}{2} \textbf{u}\cdot \nabla(Z^{2})^{\epsilon}-Z\textbf{u}\cdot \nabla Z^{\epsilon}+Z\nabla\cdot (Z\textbf{u})^{\epsilon}
\end{aligned}
\end{equation*}
and 
$$\delta f(\xi,x,t)=  f(\xi+x,t)- f(x). $$

 \begin{thm}\label{4.1}  
 The following equation of local energy balance holds in the sense of distribution
 $ \mathcal{D}(\mathbb{T}^{3}\times(0,T))$ 
  $$
\partial_{t}(|\textbf{u}|^{2}+|Z|^{2})+2\nabla\cdot(p\textbf{u})+\nabla\cdot(|\textbf{u}|^{2}v)+\nabla\cdot(|Z|^{2}\textbf{u})+2K\phi(\Bar{\theta})\Bar{Z}Z+ D_{1}(v,\textbf{u})+D_{2}
 (\textbf{u},Z)=0,
$$
such that $ K\phi(\Bar{\theta})\Bar{Z} =0$ as mentioned in the above exact system. Thus, it becomes the final form
 \begin{eqnarray*}
\partial_{t}(|\textbf{u}|^{2}+|Z|^{2})+2\nabla\cdot(p\textbf{u})+\nabla\cdot(|\textbf{u}|^{2}v)+\nabla\cdot(|Z|^{2}\textbf{u})+ D_{1}(v,\textbf{u})+D_{2}
 (\textbf{u},Z)=0.
 \end{eqnarray*}
 \end{thm}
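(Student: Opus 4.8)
The plan is to mollify the weak formulation and pass to the limit, following the Duchon–Robert scheme adapted to the Leray-$\alpha$ coupling. First I would fix a test function $\psi\in\mathcal D(\mathbb T^3\times(0,T))$ and mollify the momentum equation in space: convolving $(\ref{sys9})_1$ with $\varphi_\epsilon$ gives $\partial_t\textbf u^\epsilon + \partial_i(v_i\textbf u_j)^\epsilon+\partial_j p^\epsilon=0$ in the sense of distributions, and similarly the reactant equation $(\ref{sys9})_2$ becomes $\partial_t Z^\epsilon+\nabla\cdot(Z\textbf u)^\epsilon+K\phi(\bar\theta)\bar Z=0$ (the last term being constant under mollification by its own equation, and in fact zero by the exact system). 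Since $\textbf u, Z\in L^3_tL^3_x$ by Proposition \ref{prop 9}, the mollified fields $\textbf u^\epsilon, Z^\epsilon$ are smooth in $x$ and the products are legitimate.

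Next I would multiply the mollified momentum equation by $2\textbf u^\epsilon$ and the mollified reactant equation by $2Z^\epsilon$, integrate against $\psi$, and rearrange using the identities $2\textbf u^\epsilon\cdot\partial_t\textbf u^\epsilon=\partial_t|\textbf u^\epsilon|^2$ and $2Z^\epsilon\partial_t Z^\epsilon=\partial_t|Z^\epsilon|^2$. The transport terms get rewritten by the standard commutator trick: $2\textbf u_j^\epsilon\,\partial_i(v_i\textbf u_j)^\epsilon = \partial_i(v_i|\textbf u^\epsilon|^2) + \big(2\textbf u_j^\epsilon\partial_i(v_i\textbf u_j)^\epsilon - \partial_i(v_i|\textbf u^\epsilon|^2)\big)$, and the bracketed remainder is exactly what converges to $D_1(v,\textbf u)$ as defined in $(\ref{40})$; the algebraic expansion of $D_1$ given in $(\ref{40})$ is precisely this commutator written out term by term. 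The same manipulation on the $Z$-equation produces the divergence term $\nabla\cdot(|Z|^2\textbf u)$ plus the remainder converging to $D_2(\textbf u,Z)$. The pressure term contributes $2\nabla\cdot(p\textbf u)$ in the limit, using $p\in L^\infty_tL^2_x$ together with $\textbf u\in L^\infty_tL^2_x$ (the regularity granted in Lemma \ref{lemma 3}), and the reaction term contributes $2K\phi(\bar\theta)\bar Z Z$, which vanishes by the constraint $K\phi(\bar\theta)\bar Z=0$.

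The main obstacle — the step requiring genuine care — is the convergence of the commutator remainders to $D_1$ and $D_2$ in $\mathcal D'$. This is where the $L^3_tL^3_x$ hypothesis is used sharply: one writes the remainder in the Duchon–Robert integral form $\tfrac12\int_{\mathbb R^3}\nabla_\xi\chi_\epsilon(\xi)\cdot\delta v(\xi;x,t)\,|\delta\textbf u(\xi;x,t)|^2\,d\xi$ and must show that (i) this integral form agrees with the mollified algebraic expression up to terms vanishing in $L^1_{t,x}$, and (ii) it has a distributional limit. Step (i) is an exercise in Fubini and change of variables in the convolution; step (ii) follows from Hölder applied to $\delta v\cdot|\delta\textbf u|^2$ with exponents $3,3,3$, controlling it by $\|v\|_{L^3}\|\textbf u\|_{L^3}^2$ uniformly, so that the limit exists as a distribution even though it need not be a function. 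For the mixed term $D_2$ one uses instead that $\delta\textbf u\,|\delta Z|^2$ is controlled by $\|\textbf u\|_{L^3}\|Z\|_{L^3}^2$. I would also note the minor point that the $\alpha$-Helmholtz relation $\textbf u=v-\alpha^2\Delta v$ plays no direct role in the energy balance itself — it enters only implicitly through the regularity of $v$ versus $\textbf u$ — so the distinction between $v$ and $\textbf u$ in the divergence term $\nabla\cdot(|\textbf u|^2v)$ must be tracked carefully throughout but causes no analytic difficulty. Collecting all limits and discarding the terms that vanish under mollification yields the stated identity, and then setting $K\phi(\bar\theta)\bar Z=0$ gives the final reduced form.
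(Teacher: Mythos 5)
Your overall strategy (mollify, extract a commutator, identify its limit with the defect term) is in the right family, but the key identification step is wrong, and the error traces back to a structural choice that differs from the paper. You multiply the mollified equation by $2\textbf{u}^{\epsilon}$, producing $\partial_{t}|\textbf{u}^{\epsilon}|^{2}$ and the remainder $2\textbf{u}_{j}^{\epsilon}\partial_{i}(v_{i}\textbf{u}_{j})^{\epsilon}-\partial_{i}(v_{i}|\textbf{u}^{\epsilon}|^{2})$, and you assert that this remainder ``is precisely'' the algebraic expansion of $D_{1}$ in $(\ref{40})$ written out term by term. It is not: the expansion in $(\ref{40})$, namely $-\frac{1}{2}\partial_{i}(v_{i}\textbf{u}_{j}\textbf{u}_{j})^{\epsilon}+\frac{1}{2}v_{i}\partial_{i}(\textbf{u}_{j}\textbf{u}_{j})^{\epsilon}+\textbf{u}_{j}\partial_{i}(\textbf{u}_{j}v_{i})^{\epsilon}-v_{i}\textbf{u}_{j}\partial_{i}\textbf{u}_{j}^{\epsilon}$, contains \emph{unmollified} factors $\textbf{u}_{j}$ and $v_{i}$ multiplying mollified quantities. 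Such terms can only arise from the Duchon--Robert doubling, i.e.\ from the quantity $\textbf{u}\cdot\textbf{u}^{\epsilon}$: the paper tests the weak formulation $(\ref{sys10})$ with $\textbf{u}^{\epsilon}\chi$ and $Z^{\epsilon}\chi$ (this is exactly what Lemma \ref{lemma 3} is for), separately pairs the mollified pointwise system $(\ref{50})$ with $\textbf{u}\chi$ and $Z\chi$, and subtracts, so that the time derivatives combine into $\partial_{t}(\textbf{u}\cdot\textbf{u}^{\epsilon}+Z\cdot Z^{\epsilon})$ and the transport terms combine exactly into $D_{1,\epsilon}$, $D_{2,\epsilon}$ plus explicit divergences. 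Your version never uses the weak formulation after mollifying, so Lemma \ref{lemma 3} plays no role in your argument --- a sign that you are deriving a different identity.

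What you would actually obtain is the Constantin--E--Titi flux $2\textbf{u}_{j}^{\epsilon}\,\partial_{i}\bigl((v_{i}\textbf{u}_{j})^{\epsilon}-v_{i}\textbf{u}_{j}^{\epsilon}\bigr)$. Its distributional limit does coincide with $D_{1}(v,\textbf{u})$ under the $L^{3}_{t,x}$ hypotheses, but that coincidence is itself a theorem, requiring either the commutator decomposition $(fg)^{\epsilon}-f^{\epsilon}g^{\epsilon}=r_{\epsilon}(f,g)-(f-f^{\epsilon})(g-g^{\epsilon})$ or a separate proof that the defect is independent of the regularization; it is not ``an exercise in Fubini and change of variables,'' and your step (i) is precisely where the substantive work is being skipped. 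To prove the theorem as stated --- with $D_{1},D_{2}$ given by the integral formula $(\ref{40})$ --- either switch to the doubled quantity $\partial_{t}(\textbf{u}\cdot\textbf{u}^{\epsilon})$ as the paper does, or supply the missing argument identifying the limit of your commutator with the Duchon--Robert integral. (The remaining points --- the pressure term, handled in the paper via $p\in L^{3/2}([0,T];L^{3/2}(\mathbb{T}^{3}))$ from the elliptic pressure equation rather than your $L^{\infty}_{t}L^{2}_{x}$ assumption, the vanishing of the reaction term, and the observation that the Helmholtz relation plays no direct algebraic role --- are consistent with the paper and unproblematic.)
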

mollifying the convergence dynamics equation of the Leray-$\alpha$- gaseous stars model with $\chi_{\epsilon}$ yields that
\begin{equation}\label{50}
\left\{
\begin{aligned}
&\partial_{t} \textbf{u}^{\epsilon} + \nabla\cdot(v\otimes\textbf{u})^{\epsilon} +\nabla p^{\epsilon} = 0,\\
&\partial_{t}Z^{\epsilon} + \nabla \cdot ( Z \textbf{u})^{\epsilon} + K((\phi(\Bar{\theta})) \Bar{Z})^{\epsilon} =0
\end{aligned}
\right.
\end{equation}
This system holds pointwise in $\mathbb{T}^{3}\times (0,T)$ . we observe that $\textbf{u}^{\epsilon}$,$ Z^{\epsilon}$  $\in L^{\infty}((0,T);C^{\infty}(\mathbb{T}^{3}))$

Regarding the system (\ref{50}), we remark that
$$
\partial_{i}(v_{i}\textbf{u}_{j})^{\epsilon}+ \nabla p^{\epsilon}\in L^{\infty}((0,T);C^{\infty}(\mathbb{T}^{3}))
$$
which implies that $\textbf{u} \in W^{1,\infty}((0,T);C^{\infty}(\mathbb{T}^{3}))$.
Moreover, 
\begin{eqnarray*}
    W_{0}^{1,\infty}((0,T);C^{\infty}(\mathbb{T}^{3})) \subset W_{0}^{1,1}((0,T);C^{\infty}(\mathbb{T}^{3})) \subset W_{0}^{1,1}((0,T);H^{1}(\mathbb{T}^{3})).
\end{eqnarray*}
Also, 
$$
 H^{1}(\mathbb{T}^{3}) \subset L^{2}(\mathbb{T}^{3}).
$$ 
Thus, 
$$
\textbf{u}^{\epsilon}\in W_{0}^{1,1}((0,T);L^{2}(\mathbb{T}^{3})).
$$
The second equation of (\ref{50}) gives $$\nabla \cdot ( Z \textbf{u})^{\epsilon}\in L^{2}((0,T);C^{\infty}(\mathbb{T}^{3})),\;\;K((\phi(\Bar{\theta})) \Bar{Z})^{\epsilon}\in L^{\infty}((0,T);C^{\infty}(\mathbb{T}^{3}))\;\; \mbox{and}\;\;
\nabla p\in L^{\infty}((0,T);C^{\infty}(\mathbb{T}^{3})).
$$ 
So,  
$$\partial_{t}Z^{\epsilon}\in L^{2}((0,T);C^{\infty}(\mathbb{T}^{3})).$$
Consequently,
\begin{equation*}
\begin{aligned}
  Z^{\epsilon}\in  L^{\infty}((0,T);C^{\infty}(\mathbb{T}^{3})) \cap H^{1}((0,T);C^{\infty}(\mathbb{T}^{3})) \subset
  L^{1}((0,T);H^{3}(\mathbb{T}^{3})) \cap
  W^{1,1}((0,T);L^{2}(\mathbb{T}^{3})).
\end{aligned}
\end{equation*}
Subtracting the mollified equation multiplied by $\textbf{u} \chi$ and $Z \chi$ yields that:
\begin{equation*}
\begin{aligned}
	&\int_{0}^{T} \int_{\mathbb{T}^{3}} \textbf{u}\cdot \partial_{t}(\textbf{u}^{\epsilon}\chi)-\textbf{u}\chi \cdot \partial_{t} \textbf{u}^{\epsilon} + Z\cdot \partial_{t}(Z^{\epsilon}\chi)-Z\chi \cdot\partial_{t} Z^{\epsilon} + v\otimes \textbf{u}: \nabla(\textbf{u}^{\epsilon}\chi)-\textbf{u}\cdot(\nabla\cdot(v\otimes\textbf{u})^{\epsilon})+p\partial_{i}(\textbf{u}_{i}^{\epsilon}\chi)\\
	&-\chi\textbf{u}\cdot\nabla p^{\epsilon}+Z\textbf{u}\cdot\nabla(Z^{\epsilon}\chi)- Z\chi \nabla\cdot(Z\textbf{u})^{\epsilon}+K((\varphi(\bar{\theta})\Bar{Z})Z-((\varphi(\bar{\theta}))\Bar{Z})^{\epsilon}Z)\chi dxdt=0.
\end{aligned}
\end{equation*}
The time derivatives then become
\begin{equation*}
\begin{aligned}
&\int_{0}^{T} \int_{\mathbb{T}^{3}} \left[ \textbf{u}\cdot \partial_{t}(\textbf{u}^{\epsilon}\chi)-\textbf{u}\chi \cdot \partial_{t} \textbf{u}^{\epsilon} + Z\cdot \partial_{t}(Z^{\epsilon}\chi)-Z\chi \cdot\partial_{t} Z^{\epsilon}\right] dxdt\\
 &=  \int_{0}^{T} \int_{\mathbb{T}^{3}} \left[\textbf{u}\cdot \textbf{u}^{\epsilon}+Z\cdot Z^{\epsilon}\right]\partial_{t}\chi \mathrm{dx}\mathrm{dt} = - \langle \partial_{t}(\textbf{u}\cdot \textbf{u}^{\epsilon}+Z\cdot Z^{\epsilon}),\chi\rangle.
\end{aligned}
\end{equation*}
For the pressure terms, we get
\begin{equation*}
\begin{aligned}
    &\int_{0}^{T} \int_{\mathbb{T}^{3}}\left[p\partial_{i}(\textbf{u}_{i}^{\epsilon}\chi)
   -\chi\textbf{u}\cdot\nabla p^{\epsilon}\right]dxdt=\int_{0}^{T} \int_{\mathbb{T}^{3}}\left[p^{\epsilon}\textbf{u}_{i}\partial_{i}\chi+p\textbf{u}_{i}^{\epsilon}\partial_{i}\chi\right]dxdt= -\langle \nabla\cdot(p^{\epsilon}\textbf{u}+p\textbf{u}^{\epsilon}),\chi \rangle\\
     &\int_{0}^{T} \int_{\mathbb{T}^{3}}\left[v\otimes \textbf{u}: \nabla(\textbf{u}^{\epsilon}\chi)- \chi\textbf{u}\cdot(\nabla\cdot(v\otimes\textbf{u})^{\epsilon})\right]dxdt 
\end{aligned}
\end{equation*}
But
      $$
			v\otimes \textbf{u}: \nabla(\textbf{u}^{\epsilon}\chi)= \textbf{u}_{j}v_{i} \partial_{i}(\textbf{u}_{j}^{\epsilon}\chi)
			$$
    and
    $$
		\chi\textbf{u}\cdot(\nabla\cdot(v\otimes\textbf{u})^{\epsilon})=\chi \textbf{u}_{j}\partial_{i}(v_{i}\textbf{u}_{j})^{\epsilon}.
		$$
     Thus,
\begin{equation*}
\begin{aligned}
\int_{0}^{T} \int_{\mathbb{T}^{3}}\left[\textbf{u}_{j}v_{i} \partial_{i}(\textbf{u}_{j}^{\epsilon}\chi)-\chi \textbf{u}_{j}\partial_{i}(v_{i}\textbf{u}_{j})^{\epsilon}\right]&dxdt=\int_{0}^{T} \int_{\mathbb{T}^{3}}\left[\chi\textbf{u}_{j}v_{i} \partial_{i}(\textbf{u}_{j}^{\epsilon})+ \textbf{u}_{j}\textbf{u}_{j}^{\epsilon}v_{i}\partial_{i}(\chi)-\chi \textbf{u}_{j}\partial_{i}(v_{i}\textbf{u}_{j})^{\epsilon}\right]dxdt\\
     &= -\int_{0}^{T} \int_{\mathbb{T}^{3}}\left[\chi \textbf{u}_{j}\partial_{i}(v_{i}\textbf{u}_{j})^{\epsilon}-\chi\textbf{u}_{j}v_{i} \partial_{i}(\textbf{u}_{j}^{\epsilon})- \textbf{u}_{j}\textbf{u}_{j}^{\epsilon}v_{i}\partial_{i}(\chi)\right]dxdt\\
     &= -\int_{0}^{T} \int_{\mathbb{T}^{3}}\bigg[\chi D_{1,\epsilon}(v,\textbf{u})+\dfrac{1}{2}((\textbf{u}_{j}\textbf{u}_{j})^{\epsilon}v_{i}-(v_{i}\textbf{u}_{j}\textbf{u}_{j})^{\epsilon})\partial_{i}\chi\\
  & -\textbf{u}_{j}v_{i}\textbf{u}_{j}^{\epsilon}\partial_{i}\chi\bigg]dxdt\\
& =-\langle D_{1,\epsilon}(v,\textbf{u})+\dfrac{1}{2}\nabla\cdot(
    (|\textbf{u}|^{2}v)^{\epsilon}-(|\textbf{u}|^{2})^{\epsilon}v)+\nabla\cdot((\textbf{u}\cdot\textbf{u}^{\epsilon})v),\chi\rangle.
\end{aligned}
\end{equation*}
This allows us to apply
Lemma $\ref{lemma 3}$ and use
$\textbf{u} \chi $ , $ Z \chi$
We find that the weak formulations serve as test functions with respect to the other adjective terms:
\begin{equation*}
\begin{aligned}
    \int_{0}^{T} \int_{\mathbb{T}^{3}} Z\textbf{u}\cdot \nabla(Z^{\epsilon}\chi)&-\chi Z \nabla\cdot(Z\textbf{u})^{\epsilon} dx dt = \int_{0}^{T} \int_{\mathbb{T}^{3}} Z\textbf{u}\chi \cdot \nabla Z^{\epsilon}+ZZ^{\epsilon}\textbf{u}\cdot \nabla \chi -\chi Z \nabla \cdot(Z\textbf{u})^{\epsilon} dx dt\\
&\int_{0}^{T} \int_{\mathbb{T}^{3}} Z\textbf{u}\chi \cdot \nabla Z^{\epsilon}-\chi Z \nabla \cdot(Z\textbf{u})^{\epsilon}+ZZ^{\epsilon}\textbf{u}\cdot \nabla \chi dx dt\\
    &=  \int_{0}^{T} \int_{\mathbb{T}^{3}} -\chi D_{2,\epsilon}(\textbf{u},Z) -\dfrac{1}{2}\chi \nabla\cdot (Z^{2}\textbf{u})^{\epsilon}+\dfrac{1}{2}\chi \textbf{u}\cdot \nabla (Z^{2})^{\epsilon}+ZZ^{\epsilon}\textbf{u}\cdot \nabla \chi dx dt\\
&=  \int_{0}^{T} \int_{\mathbb{T}^{3}}  -\chi D_{2,\epsilon}(\textbf{u},Z) +\dfrac{1}{2}(Z^{2}\textbf{u})^{\epsilon}\cdot\nabla\chi  -\dfrac{1}{2}(Z^{2})^{\epsilon} \textbf{u}\cdot \nabla \chi +ZZ^{\epsilon}\textbf{u}\cdot \nabla \chi dx dt\\
&=\langle -\chi D_{2,\epsilon}(\textbf{u},Z) - \dfrac{1}{2}\nabla \cdot((Z^{2}\textbf{u})^{\epsilon}- (Z^{2})^{\epsilon} \textbf{u} ) -\nabla \cdot (ZZ^{\epsilon}\textbf{u} ) ,\chi\rangle \\
&= - \langle  D_{2,\epsilon}(\textbf{u},Z) + \dfrac{1}{2}\nabla \cdot( Z^{2}\textbf{u})^{\epsilon}-(Z^{2})^{\epsilon} \textbf{u} ) 
    +\nabla \cdot (ZZ^{\epsilon}\textbf{u} ) ,\chi\rangle.  
\end{aligned}
\end{equation*}
Regarding the last terms, we have
\begin{eqnarray*}
    \int_{0}^{T} \int_{\mathbb{T}^{3}} K((\varphi(\bar{\theta})\Bar{Z})Z-((\varphi(\bar{\theta}))\Bar{Z})^{\epsilon}Z)\chi dx dt  = \langle K((\varphi(\bar{\theta})\Bar{Z})Z-((\varphi(\bar{\theta}))\Bar{Z})^{\epsilon}Z), \chi \rangle \\
    =- \langle 
K((\varphi(\bar{\theta}))\Bar{Z})^{\epsilon}Z)-
((\varphi(\bar{\theta})\Bar{Z})Z, \chi \rangle.
\end{eqnarray*}
By combining these findings, we get the following energy expression, appropriate for distributions including test functions in $\mathcal{D}(\mathbb{T}^{3}\times (0,T)). $
 \begin{eqnarray*}
  \Big{\langle}  \partial_{t}(\textbf{u}\cdot \textbf{u}^{\epsilon}+Z\cdot Z^{\epsilon})+ \nabla\cdot(p^{\epsilon}\textbf{u}+p\textbf{u}^{\epsilon}) +D_{1,\epsilon}(v,\textbf{u})+\dfrac{1}{2}\nabla\cdot(
    (|\textbf{u}|^{2}v)^{\epsilon}-(|\textbf{u}|^{2})^{\epsilon}v)+\nabla\cdot((\textbf{u}\cdot\textbf{u}^{\epsilon})v)
    \\ +D_{2,\epsilon}(\textbf{u},Z) + \dfrac{1}{2}\nabla \cdot( 
  Z^{2}\textbf{u})^{\epsilon}-(Z^{2})^{\epsilon} \textbf{u} ) +\nabla \cdot (ZZ^{\epsilon}\textbf{u} ) + K((\varphi(\bar{\theta}))\Bar{Z})^{\epsilon}Z)-
((\varphi(\bar{\theta})\Bar{Z})Z),\chi \Big{\rangle} =0.
\end{eqnarray*}
Now, we consider the convergence of the different terms as $\epsilon\to 0$. since 
$ \textbf{u}, Z \in L^{\infty}([0,T];L^{2}(\mathbb{T}^{3}))$ it holds that 
$$
\textbf{u}\cdot \textbf{u}^{\epsilon}+Z\cdot Z^{\epsilon}\underset{\epsilon\rightarrow 0}{\longrightarrow} 
|\textbf{u}|^{2}+ |Z|^{2} \in L^{\infty}([0,T];L^{1}(\mathbb{T}^{3})). 
 $$ 
So,
 $$
\partial_{t}(\textbf{u}\cdot \textbf{u}^{\epsilon}+Z\cdot Z^{\epsilon})\underset{\epsilon\rightarrow 0}{\longrightarrow} 
\partial_{t}(|\textbf{u}|^{2}+ |Z|^{2}).
$$
But, $\textbf{u} , Z\in L^{3}([0,T];L^{3}(\mathbb{T}^{3}))$. According to Proposition \ref{prop 9}, we obtain $\textbf{u}_{j}v_{i}$ in $L^{\frac{3}{2}}([0,T];L^{\frac{3}{2}}(\mathbb{T}^{3}))$. The divergence of (\ref{sys9} ) gives
\begin{eqnarray*}
    -\Delta p = \partial_{j}\partial_{i}(v_{i}\textbf{u}_{j}).
\end{eqnarray*}
If $p$ represents the sole solution with a zero mean, then the linear operator $    A:v_{i}\textbf{u}_{j} \rightarrow p$ is strongly continuous on $L^{p}$ for $1 < p <\infty$.
Thus, $p\in L^{\frac{3}{2}}([0,T];L^{\frac{3}{2}}(\mathbb{T}^{3}))$ and therefore 
$$p^{\epsilon}\textbf{u}+p\textbf{u}^{\epsilon}\underset{\epsilon\rightarrow 0}{\longrightarrow} 2p\textbf{u} \in L^{1}([0,T];L^{1}(\mathbb{T}^{3}))$$
\begin{equation*}
\begin{aligned}
||p^{\epsilon}\textbf{u}+p\textbf{u}^{\epsilon}-2 p \textbf{u}||_{L^{1}(\mathbb{T}^{3}))}
&=||p^{\epsilon}\textbf{u}- p \textbf{u}+p\textbf{u}^{\epsilon}- p \textbf{u}||_{L^{1}(\mathbb{T}^{3}))}\\
&=||(p^{\epsilon}- p )\textbf{u}+p(\textbf{u}^{\epsilon}-\textbf{u})||_{L^{1}(\mathbb{T}^{3}))}\\
&\leq
||(p^{\epsilon}- p )\textbf{u}||_{L^{1}(\mathbb{T}^{3}))}+||p(\textbf{u}^{\epsilon}-\textbf{u})||_{L^{1}(\mathbb{T}^{3})}\\
&\leq
||(p^{\epsilon}- p )||_{L^{\frac{3}{2}}(\mathbb{T}^{3})}||\textbf{u}||_{L^{3}(\mathbb{T}^{3})}+||p||_{L^{\frac{3}{2}}(\mathbb{T}^{3})}||\textbf{u}^{\epsilon}-\textbf{u}||_{L^{3}(\mathbb{T}^{3})}.
\end{aligned}
\end{equation*}
Proceeding to the limit $\epsilon\rightarrow 0$, we acquire
$$
||(p^{\epsilon}- p )||_{L^{\frac{3}{2}}(\mathbb{T}^{3})}\underset{\epsilon\rightarrow 0}{\longrightarrow} 0\;\;\mbox{and}\;\;||\textbf{u}^{\epsilon}-\textbf{u}||_{L^{3}(\mathbb{T}^{3})}\underset{\epsilon\rightarrow 0}{\longrightarrow} 0.
$$
Thus
$$
||p^{\epsilon}\textbf{u}+p\textbf{u}^{\epsilon}-2 p \textbf{u}||_{L^{1}(\mathbb{T}^{3})}\underset{\epsilon\rightarrow 0}{\longrightarrow} 0.
$$
Now, we show that 
$
(\textbf{u}_{j}\textbf{u}_{j} v_{i})^{\epsilon}-
(\textbf{u}_{j}\textbf{u}_{j})^{\epsilon} v_{i}
\underset{\epsilon\rightarrow 0}{\longrightarrow} 0\;\;\mbox{in}\;\;L^{\infty}([0,T];L^{1}(\mathbb{T}^{3})).$ In fact
\begin{equation*}
\begin{aligned}
 ||(\textbf{u}_{j}\textbf{u}_{j} v_{i})^{\epsilon}-
(\textbf{u}_{j}\textbf{u}_{j})^{\epsilon} v_{i}||_{L^{1}}
&=  ||(\textbf{u}_{j}\textbf{u}_{j} v_{i})^{\epsilon}-\textbf{u}_{j}\textbf{u}_{j} v_{i}
+\textbf{u}_{j}\textbf{u}_{j} v_{i}-
(\textbf{u}_{j}\textbf{u}_{j})^{\epsilon} v_{i}||_{L^{1}}\\
&\leq
||(\textbf{u}_{j}\textbf{u}_{j} v_{i})^{\epsilon}-\textbf{u}_{j}\textbf{u}_{j} v_{i}||_{L^{1}}
+|| v_{i}(\textbf{u}_{j}\textbf{u}_{j}-
(\textbf{u}_{j}\textbf{u}_{j})^{\epsilon} )||_{L^{1}}\\
&\leq
||(\textbf{u}_{j}\textbf{u}_{j} v_{i})^{\epsilon}-\textbf{u}_{j}\textbf{u}_{j} v_{i}||_{L^{1}}+|| v_{i}||_{L^{\infty}}||(\textbf{u}_{j}\textbf{u}_{j})-
(\textbf{u}_{j}\textbf{u}_{j})^{\epsilon} )||_{L^{1}}.
\end{aligned}
\end{equation*}
Once more, 
\begin{eqnarray*}
    ZZ^{\epsilon}   \underset{\epsilon\rightarrow 0}{\longrightarrow}  Z^{2} \in L^{\frac{3}{2}}([0,T];L^{\frac{3}{2}}(\mathbb{T}^{3})).
\end{eqnarray*}
More specifically, 
\begin{eqnarray*}
(ZZ^{\epsilon}\textbf{u})   \underset{\epsilon\rightarrow 0}{\longrightarrow}  (Z^{2}) \textbf{u}\in L^{1}([0,T];L^{1}(\mathbb{T}^{3})).
\end{eqnarray*}
This leads us to conclude that 
\begin{eqnarray*}
(Z^{2}\textbf{u})^{\epsilon} -  (Z^{2})^{\epsilon} \textbf{u} \underset{\epsilon\rightarrow 0}{\longrightarrow} 0 \in L^{1}([0,T];L^{1}(\mathbb{T}^{3}))
\end{eqnarray*}
Hence
\begin{eqnarray*}
    K((\varphi(\bar{\theta}))\Bar{Z})^{\epsilon}Z)-
((\varphi(\bar{\theta})\Bar{Z})Z)\underset{\epsilon\rightarrow 0}{\longrightarrow} 0 \in L^{1}([0,T];L^{1}(\mathbb{T}^{3})).
\end{eqnarray*}
\begin{prop}\label{prop 4}
   Let $\textbf{u}, Z $ be a weak solution of the the Leray-$\alpha $ Gaseous stars model.
   Let $C \in L^{1}(0,T)$ and assume that $\sigma_{i}\in L_{loc}^{\infty}(\mathbb{R})$ such that $\sigma_{i}(|\xi|)\rightarrow 0$ as $(|\xi|)\rightarrow 0 $ for $i=1,2$. Moreover, assume that  
\begin{equation*}
\begin{aligned}
\int_{\mathbb{T}^{3}}|\delta v(\xi,x,t)|(|\delta \textbf{u}(\xi,x,t)|^{2})dx &\leq& C(t)|\xi|\sigma_{1}(|\xi|) 
&\\\int_{\mathbb{T}^{3}}|\delta \textbf{u}(\xi,x,t)|(|\delta Z(\xi,x,t)|^{2})dx &\leq& C(t)|\xi|\sigma_{2}(|\xi|).
\end{aligned}
\end{equation*}
Then 
$$
\lim\limits_{\epsilon \rightarrow0} D_{1,\epsilon}(v,\textbf{u})=D_{1}(v,\textbf{u})=0\;\;\mbox{and}\;\;\lim\limits_{\epsilon \rightarrow0} D_{2,\epsilon}(\textbf{u},Z)=D_{2}(\textbf{u},Z)=0.
 $$ 
This indicates the weak solution conserves energy.
\end{prop}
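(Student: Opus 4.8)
The plan is to follow the Duchon--Robert scheme. First I would show that the two defect distributions $D_{1,\epsilon}(v,\textbf{u})$ and $D_{2,\epsilon}(\textbf{u},Z)$ — i.e.\ the pre-limit integrals appearing in $(\ref{40})$ — converge to $0$ in $L^{1}((0,T)\times\mathbb{T}^{3})$, so that their distributional limits $D_{1}(v,\textbf{u})$ and $D_{2}(\textbf{u},Z)$ vanish; then I would insert this into the reduced local energy balance from Theorem \ref{4.1} and integrate over $\mathbb{T}^{3}$ to read off conservation.

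For the first step I would estimate $D_{1,\epsilon}$ directly from its representation in $(\ref{40})$. Taking absolute values, integrating over $x\in\mathbb{T}^{3}$, and using Fubini,
$$
\int_{\mathbb{T}^{3}}\bigl|D_{1,\epsilon}(v,\textbf{u})(x,t)\bigr|\,dx\leq\tfrac12\int_{\mathbb{R}^{3}}|\nabla_{\xi}\chi_{\epsilon}(\xi)|\Bigl(\int_{\mathbb{T}^{3}}|\delta v(\xi;x,t)|\,|\delta \textbf{u}(\xi;x,t)|^{2}\,dx\Bigr)d\xi .
$$
By the hypothesis the inner integral is bounded by $C(t)\,|\xi|\,\sigma_{1}(|\xi|)$, while $\chi_{\epsilon}(\xi)=\epsilon^{-3}\chi(\xi/\epsilon)$ makes $\nabla_{\xi}\chi_{\epsilon}$ supported in $\{|\xi|\lesssim\epsilon\}$ with $|\nabla_{\xi}\chi_{\epsilon}(\xi)|=\epsilon^{-4}\bigl|(\nabla\chi)(\xi/\epsilon)\bigr|$. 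The substitution $\xi=\epsilon\eta$ collapses the $\epsilon$-powers and yields
$$
\int_{\mathbb{T}^{3}}\bigl|D_{1,\epsilon}(v,\textbf{u})(x,t)\bigr|\,dx\;\leq\;\tfrac12\,C(t)\int_{\mathbb{R}^{3}}\bigl|(\nabla\chi)(\eta)\bigr|\,|\eta|\,\sigma_{1}(\epsilon|\eta|)\,d\eta\;=:\;C(t)\,\omega_{1}(\epsilon).
$$
Since $\sigma_{1}\in L^{\infty}_{loc}(\mathbb{R})$ and $\sigma_{1}(s)\to0$ as $s\to0$, the integrand is dominated on the fixed compact support of $\nabla\chi$ and tends to $0$ pointwise, so dominated convergence gives $\omega_{1}(\epsilon)\to0$ as $\epsilon\to0$, with a rate independent of $t$. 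The identical argument with $\sigma_{2}$ gives $\int_{\mathbb{T}^{3}}|D_{2,\epsilon}(\textbf{u},Z)(x,t)|\,dx\leq C(t)\,\omega_{2}(\epsilon)$ with $\omega_{2}(\epsilon)\to0$.

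Next I would integrate in $t$: since $C\in L^{1}(0,T)$, these bounds give $\|D_{i,\epsilon}\|_{L^{1}((0,T)\times\mathbb{T}^{3})}\leq\omega_{i}(\epsilon)\,\|C\|_{L^{1}(0,T)}\to0$, hence $D_{i,\epsilon}\to0$ in $L^{1}$ and in particular in $\mathcal{D}'(\mathbb{T}^{3}\times(0,T))$; comparing with $(\ref{40})$ this forces $D_{1}(v,\textbf{u})=D_{2}(\textbf{u},Z)=0$. To finish, I would use that, since $K\phi(\bar\theta)\bar Z=0$, Theorem \ref{4.1} gives
$$
\partial_{t}(|\textbf{u}|^{2}+|Z|^{2})+2\nabla\cdot(p\textbf{u})+\nabla\cdot(|\textbf{u}|^{2}v)+\nabla\cdot(|Z|^{2}\textbf{u})+D_{1}(v,\textbf{u})+D_{2}(\textbf{u},Z)=0
$$
in $\mathcal{D}'(\mathbb{T}^{3}\times(0,T))$; testing against $\chi(x,t)=\psi(t)$ with $\psi\in\mathcal{D}(0,T)$ annihilates every divergence term after integration over the torus, and with $D_{1}=D_{2}=0$ one is left with $\int_{0}^{T}\bigl(\int_{\mathbb{T}^{3}}(|\textbf{u}|^{2}+|Z|^{2})\,dx\bigr)\psi'(t)\,dt=0$ for all such $\psi$. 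Hence $t\mapsto\int_{\mathbb{T}^{3}}(|\textbf{u}|^{2}+|Z|^{2})\,dx$ agrees a.e.\ with a constant, which is the asserted energy conservation.

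The main obstacle is confined to the first step: pinning down the $\epsilon$-scaling of $\nabla_{\xi}\chi_{\epsilon}$ so that $|\nabla_{\xi}\chi_{\epsilon}(\xi)|\,|\xi|$ integrates to a constant independent of $\epsilon$, and checking that the modulus $\omega_{i}(\epsilon)$ thereby obtained is uniform in $t$, so that the subsequent $L^{1}_{t}$ integration against $C$ is legitimate. Everything after that — passing to the distributional limit and extracting conservation from the balance law — is soft.
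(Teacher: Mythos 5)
Your proposal is correct and follows essentially the same route as the paper: take absolute values in the integral representation of $D_{i,\epsilon}$, apply Fubini and the hypothesis to bound the inner $x$-integral by $C(t)|\xi|\sigma_{i}(|\xi|)$, rescale $\xi=\epsilon\eta$ so that the $\epsilon$-powers in $\nabla_{\xi}\chi_{\epsilon}$ cancel, and conclude by dominated convergence and $C\in L^{1}(0,T)$ that $D_{i,\epsilon}\to 0$ in $L^{1}$, hence $D_{i}=0$. Your explicit bookkeeping of the $\epsilon^{-4}$ scaling of $\nabla_{\xi}\chi_{\epsilon}$ and your closing derivation of conservation from the local balance are slightly more detailed than the paper's write-up (which defers the latter to the final theorem), but the argument is the same.
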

\begin{proof}
We have
\begin{equation*}
\begin{aligned}
     |D_{1,\epsilon}(v,\textbf{u})|&=  \dfrac{1}{2}\Big{|}\int_{\mathbb{R}^{3}}\nabla_{\xi}\chi_{\epsilon} (\xi)\cdot\delta v(\xi;x,t)|\delta \textbf{u}(\xi;x,t)|^{2}d\xi\Big{|}\\
     &\leq\dfrac{1}{2}\int_{\mathbb{R}^{3}}|\nabla_{\xi}\chi_{\epsilon} (\xi)||\delta v(\xi;x,t)||\delta \textbf{u}(\xi;x,t)|^{2}d\xi,
\end{aligned}
\end{equation*}
\begin{equation*}
\begin{aligned}
       | D_{2}(\textbf{u},Z)|&= \dfrac{1}{2}\Big{|}\int_{\mathbb{R}^{3}}\nabla_{\xi}\chi_{\epsilon} (\xi)\cdot\delta \textbf{u}(\xi;x,t) (|\delta Z(\xi;x,t) |^{2}d\xi\Big{|}\\
			&\leq
        \dfrac{1}{2}\int_{\mathbb{R}^{3}}|\nabla_{\xi}\chi_{\epsilon} (\xi)||\delta \textbf{u}(\xi;x,t)||\delta Z(\xi;x,t)|^{2}d\xi.
\end{aligned}
\end{equation*}
Integrating this inequality over $\mathbb{T}^{3}\times(0,T)$ implies that
\begin{equation*}
\begin{aligned}
   \int_{0}^{T} \int_{\mathbb{T}^{3}} |D_{1,\epsilon}(v,\textbf{u})|dx dt &=  
     \dfrac{1}{2} \int_{0}^{T} dt \int_{\mathbb{R}^{3}}|\nabla_{\xi}\chi_{\epsilon} (\xi)||\int_{\mathbb{T}^{3}}|\delta v(\xi;x,t)||\delta \textbf{u}(\xi;x,t)|^{2} dx d\xi\\
      &\leq   \dfrac{1}{2} \int_{0}^{T} C(t) dt \int_{\mathbb{R}^{3}} \dfrac{1}{\epsilon^{3}}|\nabla_{\xi}\chi(\frac{\xi}{\epsilon})||\xi||\sigma_{1}(|\xi|)d\xi
			\end{aligned}
       \end{equation*}
\begin{equation*}
\begin{aligned}
   \int_{0}^{T} \int_{\mathbb{T}^{3}} |D_{2,\epsilon}(\textbf{u},Z)|dx dt &=  
     \dfrac{1}{2} \int_{0}^{T} dt \int_{\mathbb{R}^{3}}|\nabla_{\xi}\chi_{\epsilon} (\xi)||\int_{\mathbb{T}^{3}}|\delta \textbf{u}(\xi;x,t)||\delta Z(\xi;x,t)|^{2} dx d\xi\\
   &\leq   \dfrac{1}{2} \int_{0}^{T} C(t) dt \int_{\mathbb{R}^{3}} \dfrac{1}{\epsilon^{3}}|\nabla_{\xi}\chi(\frac{\xi}{\epsilon})||\xi||\sigma_{2}(|\xi|)d\xi.
	\end{aligned}
       \end{equation*}
The fact that $\int_{0}^{T} C(t) dt < \infty$ and the change of variable $\xi= \epsilon y$ lead to the conclusion that
\begin{equation*}
\begin{aligned}
&\int_{0}^{T} \int_{\mathbb{T}^{3}} |D_{1,\epsilon}(v,\textbf{u})|dx dt \precsim\int_{\mathbb{T}^{3}}|\nabla_{\xi}\chi(y)||\xi||\sigma_{1}(|\xi|)dy=\int_{\mathbb{T}^{3}}|\nabla_{y}\chi(y)||y||\sigma_{1}(\epsilon|y|)dy\\
& \int_{0}^{T} \int_{\mathbb{T}^{3}} |D_{2,\epsilon}(\textbf{u},Z)|dx dt \precsim\int_{\mathbb{T}^{3}}|\nabla_{\xi}\chi(y)||\xi||\sigma_{2}(|\xi|)dy=\int_{\mathbb{T}^{3}}|\nabla_{y}\chi(y)||y||\sigma_{2}(\epsilon|y|)dy
\end{aligned}
\end{equation*}
Since 
$$
\sigma_{i}(|\xi|)\rightarrow 0 \;\;\mbox{for}\;\; i=1,2\;\; \mbox{as}\;\;|\xi|\rightarrow 0
$$
we get 
$$
D_{1,\epsilon}(v,\textbf{u})\rightarrow 0\;\;\mbox{and} D_{2,\epsilon}(\textbf{u},Z)\rightarrow 0\;\;\mbox{in}\;\;L^{1}(\mathbb{T}^{3}\times (0,T)) \;\;\mbox{in}\;\;
\xi \rightarrow 0.
$$
By Lebesgue dominated convergence theorem \cite{18}, we obtain 
$$
D_{1}(v,\textbf{u})=0 \;\;\mbox{and}\;\;D_{2}(\textbf{u},Z)=0.
$$
\end{proof}
\begin{prop}\label{prop 13}
    Let $(\textbf{u},Z)$ be a weak solution of the inviscid convergence dynamics of 3D-Leray-$\alpha $ Gaseous stars model such that 
		$$
		\textbf{u}\in L^{3}((0,T);B_{3,\infty}^{s}(\mathbb{T}^{3}))\;\;\mbox{ and}\;\; Z\in L^{3}((0,T);B_{3,\infty}^{r}(\mathbb{T}^{3}))\;\;\mbox{with}\;\;s,r> 0\;\;\mbox{ and}\;\;s+2r> 1.
		$$
			Then 
			$$
\lim\limits_{\epsilon \rightarrow 0} D_{1,\epsilon}(v,\textbf{u})=D_{1}(v,\textbf{u})=0\;\;\mbox{ and}\;\;\lim\limits_{\epsilon \rightarrow 0} D_{2,\epsilon}(\textbf{u},Z)=D_{2}(\textbf{u},Z)=0,
$$
which implies conservation of energy .
\end{prop}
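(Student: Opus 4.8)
The plan is to reduce everything to Proposition \ref{prop 4}: under the stated Besov hypotheses I will verify its two structural assumptions and then quote it verbatim. First I would record the regularity of the filtered field $v$. For fixed $\alpha>0$ the Helmholtz resolvent $(1-\alpha^{2}\Delta)^{-1}$ is the Fourier multiplier with symbol $(1+\alpha^{2}|k|^{2})^{-1}$, and a Littlewood--Paley block estimate (Bernstein on each dyadic annulus) shows it maps $B_{3,\infty}^{s}(\mathbb{T}^{3})$ boundedly into $B_{3,\infty}^{s+2}(\mathbb{T}^{3})$ with operator norm $c_{\alpha}$ depending only on $\alpha$. Since $v(t)=(1-\alpha^{2}\Delta)^{-1}\textbf{u}(t)$ pointwise in time, this gives $v\in L^{3}((0,T);B_{3,\infty}^{s+2}(\mathbb{T}^{3}))$ with $\|v(t)\|_{B_{3,\infty}^{s+2}}\le c_{\alpha}\|\textbf{u}(t)\|_{B_{3,\infty}^{s}}$. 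I would also note that $s,r>0$ forces $B_{3,\infty}^{s},B_{3,\infty}^{r}\hookrightarrow L^{3}(\mathbb{T}^{3})$, so $\textbf{u},Z\in L^{3}((0,T);L^{3}(\mathbb{T}^{3}))$ and the local energy balance of Theorem \ref{4.1} is available in this setting.

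Next I would estimate the two spatial averages appearing in the hypothesis of Proposition \ref{prop 4}. By Hölder's inequality on $\mathbb{T}^{3}$,
$$
\int_{\mathbb{T}^{3}}|\delta v(\xi,x,t)|\,|\delta\textbf{u}(\xi,x,t)|^{2}\,dx\le \|\delta v(\xi,\cdot,t)\|_{L^{3}}\,\|\delta\textbf{u}(\xi,\cdot,t)\|_{L^{3}}^{2},
$$
and likewise with $(v,\textbf{u})$ replaced by $(\textbf{u},Z)$. Applying the increment inequality of Definition \ref{def3.3}, namely $\|\delta g(\xi,\cdot,t)\|_{L^{3}}\le C|\xi|^{\rho}\|g(t)\|_{B_{3,\infty}^{\rho}}$, to each factor yields
$$
\int_{\mathbb{T}^{3}}|\delta v|\,|\delta\textbf{u}|^{2}\,dx\le C^{3}|\xi|^{\,3s+2}\,\|v(t)\|_{B_{3,\infty}^{s+2}}\|\textbf{u}(t)\|_{B_{3,\infty}^{s}}^{2},\qquad \int_{\mathbb{T}^{3}}|\delta\textbf{u}|\,|\delta Z|^{2}\,dx\le C^{3}|\xi|^{\,s+2r}\,\|\textbf{u}(t)\|_{B_{3,\infty}^{s}}\|Z(t)\|_{B_{3,\infty}^{r}}^{2}.
$$
Writing these in the form $C_{i}(t)\,|\xi|\,\sigma_{i}(|\xi|)$ required by Proposition \ref{prop 4}, I set $\sigma_{1}(\rho)=\rho^{\,3s+1}$, $\sigma_{2}(\rho)=\rho^{\,s+2r-1}$, $C_{1}(t)=C^{3}c_{\alpha}\|\textbf{u}(t)\|_{B_{3,\infty}^{s}}^{3}$ and $C_{2}(t)=C^{3}\|\textbf{u}(t)\|_{B_{3,\infty}^{s}}\|Z(t)\|_{B_{3,\infty}^{r}}^{2}$. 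Both $\sigma_{i}$ are continuous, hence in $L^{\infty}_{loc}(\mathbb{R})$, and $\sigma_{i}(\rho)\to0$ as $\rho\to0$ precisely because $3s+1>0$ and $s+2r-1>0$ — the latter being exactly the hypothesis $s+2r>1$. As for the coefficients, $C_{1}\in L^{1}(0,T)$ since $\textbf{u}\in L^{3}((0,T);B_{3,\infty}^{s})$, and $C_{2}\in L^{1}(0,T)$ by Hölder in time with exponents $3$ and $3/2$, using $\textbf{u}\in L^{3}_{t}B_{3,\infty}^{s}$ and $Z\in L^{3}_{t}B_{3,\infty}^{r}$.

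With both assumptions of Proposition \ref{prop 4} verified, that proposition gives $\lim_{\epsilon\to0}D_{1,\epsilon}(v,\textbf{u})=D_{1}(v,\textbf{u})=0$ and $\lim_{\epsilon\to0}D_{2,\epsilon}(\textbf{u},Z)=D_{2}(\textbf{u},Z)=0$ in $L^{1}(\mathbb{T}^{3}\times(0,T))$; inserting this into the local energy balance of Theorem \ref{4.1} kills the defect terms, and integrating the remaining divergence-form identity over $\mathbb{T}^{3}$ yields $\frac{d}{dt}\int_{\mathbb{T}^{3}}(|\textbf{u}|^{2}+|Z|^{2})\,dx=0$, i.e. conservation of energy. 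The only step needing genuine care is the Helmholtz smoothing, that is, the claim that $v$ inherits two extra derivatives of Besov regularity from $\textbf{u}$ with no loss in the time exponent; everything else is Hölder's inequality together with the Besov increment bound. I expect the time-integrability bookkeeping — balancing the quadratic factor $\|\cdot\|_{B_{3,\infty}^{r}}^{2}\in L^{3/2}_{t}$ against $\|\textbf{u}(t)\|_{B_{3,\infty}^{s}}\in L^{3}_{t}$ in $C_{2}$ — to be the part most prone to slips, although it closes cleanly for the stated exponents.
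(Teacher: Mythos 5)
Your proposal is correct and follows essentially the same route as the paper: H\"older's inequality combined with the increment bound of Definition \ref{def3.3} to verify the hypotheses of Proposition \ref{prop 4}, which is then invoked to conclude; you are in fact more careful than the paper, which never justifies the regularity of $v$ nor the time-integrability of the coefficients $C_{i}(t)$. The one quibble is that the increment inequality cannot legitimately be applied at regularity $s+2>1$ (first differences saturate at order $|\xi|^{1}$, so $\|\delta v\|_{L^{3}}\lesssim |\xi|\,\|v\|_{W^{1,3}}$ is the right bound and your exponent $3s+2$ should be $1+2s$), but since all that is needed is $\sigma_{1}(|\xi|)\to 0$ as $|\xi|\to 0$, this does not affect the conclusion.
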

\begin{proof}
 we will use definition 3.3 for p=3
    \begin{eqnarray*}
        \lVert f(.+\xi)-f(.)\rVert_{L^{3}}\leq C |\xi|^{r}\lVert f\rVert_{B_{3,\infty}^{r}}
    \end{eqnarray*}
    \\
Proposition \ref{prop 4} gives
\begin{equation*}
\begin{aligned}
&\int_{\mathbb{T}^{3}}|\delta v||\delta \textbf{u}|^{2}dx \leq|\xi|^{1+2r}||\textbf{u}||_{B_{3,\infty}^{s}}^{3},\\
    \\
   &\int_{\mathbb{T}^{3}}|\delta \textbf{u}||\delta Z |^{2}dx \leq|\xi|^{s+2r}||\textbf{u}||_{B_{3,\infty}^{s}}||Z||_{B_{3,\infty}^{s}}^{2}.
\end{aligned}
\end{equation*}
Then
$$
\sigma_{1}(|\xi|):=|\xi|^{2r}\rightarrow 0\;\;\mbox{and}\;\;\sigma_{2}(|\xi|):= |\xi|^{s+2r-1}\rightarrow 0\;\;\mbox{as}\;\; |\xi|\rightarrow 0.
$$ 
Therefore $D_{2}(\textbf{u},Z)= D_{1}(v,\textbf{u})=0 $.proposition $\ref{prop 4}$ allows for the completion of the proof. 
\end{proof}
\section{Conservation of energy}
\begin{thm}
     Let $(\textbf{u},Z)$ be a weak solution of the inviscid convergence Dynamics of 3D-Leray-$\alpha $ Gaseous stars model such that $\textbf{u}\in L^{3}((0,T);B_{3,\infty}^{s}(\mathbb{T}^{3}))$ and $ Z\in L^{3}((0,T);B_{3,\infty}^{r}(\mathbb{T}^{3}))$ with $s,r> 0$ and $s+2r> 1$ the weak solution conserves energy so for almost all
     $t_{1},t_{2} \in (0,T)$ it holds that 
     \begin{eqnarray*}
         ||\textbf{u}(t_{1},.)||_{L^{2}}^{2}+ ||Z(t_{1},.)||_{L^{2}}^{2} =  ||\textbf{u}(t_{2},.)||_{L^{2}}^{2} +  ||Z(t_{2},.)||_{L^{2}}^{2}. 
     \end{eqnarray*}
		\end{thm}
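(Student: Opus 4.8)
The plan is to obtain the global energy equality as a short corollary of the local energy balance of Theorem~\ref{4.1}, after killing the dissipation and the divergence terms. First I would observe that since the Besov norm dominates the $L^3$ norm, the hypotheses give $\textbf{u},Z\in L^{3}((0,T);L^{3}(\mathbb{T}^{3}))$, so Theorem~\ref{4.1} and Proposition~\ref{prop 13} both apply. The latter, which uses exactly the condition $s,r>0$ and $s+2r>1$, yields $D_{1}(v,\textbf{u})=D_{2}(\textbf{u},Z)=0$ in $\mathcal{D}'(\mathbb{T}^{3}\times(0,T))$. Feeding this, together with the constraint $K\phi(\bar\theta)\bar Z=0$ from the exact system, into the identity of Theorem~\ref{4.1} collapses it to
\begin{equation*}
\partial_{t}\big(|\textbf{u}|^{2}+|Z|^{2}\big)+2\,\nabla\cdot(p\textbf{u})+\nabla\cdot\big(|\textbf{u}|^{2}v\big)+\nabla\cdot\big(|Z|^{2}\textbf{u}\big)=0
\end{equation*}
in the sense of distributions on $\mathbb{T}^{3}\times(0,T)$.

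Next I would test this identity against test functions of the special form $\chi(x,t)=\psi(t)$ with $\psi\in\mathcal{D}((0,T))$; because $\mathbb{T}^{3}$ is compact such $\chi$ belong to $\mathcal{D}(\mathbb{T}^{3}\times(0,T))$, hence are admissible. Since $\nabla_{x}\chi\equiv0$, the pairing of each spatial-divergence term with $\chi$ is, by definition of the distributional divergence, $-\int_{0}^{T}\!\int_{\mathbb{T}^{3}}F\cdot\nabla_{x}\chi\,dx\,dt=0$ — this requires only that the fluxes be locally integrable, and indeed $p\textbf{u}$, $|\textbf{u}|^{2}v$ and $|Z|^{2}\textbf{u}$ all lie in $L^{1}(\mathbb{T}^{3}\times(0,T))$: Proposition~\ref{prop 9} gives $\textbf{u},Z\in L^{3}L^{3}$, the Calder\'on--Zygmund estimate for $-\Delta p=\partial_{i}\partial_{j}(v_{i}\textbf{u}_{j})$ gives $p\in L^{3/2}L^{3/2}$, and $v=(1-\alpha^{2}\Delta)^{-1}\textbf{u}$ is at least as regular as $\textbf{u}$, so H\"older's inequality in space and time closes the estimates. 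What remains of the identity, paired with $\psi$, is
\begin{equation*}
\int_{0}^{T}\Big(\int_{\mathbb{T}^{3}}\big(|\textbf{u}(x,t)|^{2}+|Z(x,t)|^{2}\big)\,dx\Big)\psi'(t)\,dt=0\qquad\text{for every }\psi\in\mathcal{D}((0,T)).
\end{equation*}

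Finally I would set $E(t):=\int_{\mathbb{T}^{3}}\big(|\textbf{u}(x,t)|^{2}+|Z(x,t)|^{2}\big)\,dx$; since $\textbf{u},Z\in L^{\infty}((0,T);L^{2}(\mathbb{T}^{3}))$ this $E$ belongs to $L^{\infty}(0,T)$, hence defines a distribution on $(0,T)$ whose derivative vanishes by the previous display. Therefore $E$ coincides almost everywhere with a constant, which is precisely the claim that $\|\textbf{u}(t_{1},\cdot)\|_{L^{2}}^{2}+\|Z(t_{1},\cdot)\|_{L^{2}}^{2}=\|\textbf{u}(t_{2},\cdot)\|_{L^{2}}^{2}+\|Z(t_{2},\cdot)\|_{L^{2}}^{2}$ for almost all $t_{1},t_{2}\in(0,T)$. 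The genuine work has already been carried out upstream — proving the distributional local energy balance (Theorem~\ref{4.1}) and showing the two dissipation functionals vanish under $s+2r>1$ (Proposition~\ref{prop 13}); the only point needing any care at this stage is the admissibility of the $x$-independent test functions together with the $L^1$ integrability of the three flux fields, both of which are routine. So I do not expect a real obstacle: this theorem is essentially a corollary of the two preceding sections.
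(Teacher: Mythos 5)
Your proposal is correct and follows essentially the same route as the paper: both reduce to the local energy balance of Theorem \ref{4.1} with $D_{1}=D_{2}=0$ from Proposition \ref{prop 13}, then test against spatially constant, time-dependent test functions so that all divergence terms drop out. The only cosmetic difference is the last step — the paper plugs in the specific mollified indicator $\phi_{1}(t)=\int_{0}^{t}\phi_{\epsilon}(t'-t_{1})-\phi_{\epsilon}(t'-t_{2})\,dt'$ and invokes the Lebesgue differentiation theorem, whereas you use arbitrary $\psi\in\mathcal{D}((0,T))$ and conclude from the vanishing of the distributional derivative that $E$ is a.e.\ constant; these are equivalent.
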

   \begin{proof}
     \par  In the same way as in \cite{9}, the local equation of energy established in the Theorem  \ref{4.1} is combined with proposition $\ref{prop 13} $ to produce the following: 
         \begin{eqnarray*}
             \partial_{t}(|\textbf{u}|^{2}+|Z|^{2})+2\nabla\cdot(p\textbf{u})+\nabla\cdot(|\textbf{u}|^{2}v+|Z|^{2}\textbf{u})=0.
         \end{eqnarray*}
         This equation is valid in the distributional sense, meaning that it applies to any test function $\phi \in \mathcal{D}(\mathbb{T}^{3}\times(0,T);\mathbb{T}^{3}).$ So,
\begin{eqnarray}\label{96}
  \int_{0}^{T} \int_{\mathbb{T}^{3}} \frac{1}{2}( |\textbf{u}|^{2}+|Z|^{2})\partial_{t}\phi  dx dt = -  \int_{0}^{T} \int_{\mathbb{T}^{3}} \nabla\phi\cdot (\frac{1}{2}(|\textbf{u}|^{2}v+|Z|^{2}\textbf{u})+ \textbf{u}p)dxdt.
\end{eqnarray}
We want to show that the $L^{2}$ norm of $\textbf{u},Z$ at time $t_{1}$ equals the $L^{2}$ norm at time $t_{2}$ (where $t_{1}<t_{2})$. To this end, let $\phi : \mathbb{R} \rightarrow \mathbb{R}$  be a standard  $C_{c}^{\infty}$ mollified with $\int_{\mathbb{R}} \phi(t)dt=1$ with support contained in $[-1;1]$. 
\par Let us introduce the notation $\phi_{\epsilon}(t):= \frac{1}{\epsilon}\phi(\frac{t}{\epsilon})$
We choose the following test function:
\begin{eqnarray*}
    \phi_{1}(t)=\int_{0}^{t} \phi_{\epsilon}(t'-t_{1})-\phi_{\epsilon}(t'-t_{2}) dt'.
\end{eqnarray*}
For a small enough value of $\epsilon$, we have 
\begin{eqnarray*}
    \begin{cases}
        \phi_{1}(t)=0 \;\;\mbox{if} \;\;t\in ]0,t_{1}-\epsilon[\cup]t_{2}+\epsilon,T[, \\
        \phi_{1}(t)=1 \;\;\mbox{if} \;\;t\in ]t_{1}+\epsilon[\cup]t_{2}-\epsilon[. \\
    \end{cases}
\end{eqnarray*}
This function has compact support in $(0,T)$. Therefore,
\begin{eqnarray*}
    \int_{t_{1}-\epsilon}^{t_{1}+\epsilon} \int_{\mathbb{T}^{3}} (|\textbf{u}|^{2}+|Z|^{2})\phi_{\epsilon}(t-t_{1})dxdt= \int_{t_{2}-\epsilon}^{t_{2}+\epsilon} \int_{\mathbb{T}^{3}} (|\textbf{u}|^{2}+|Z|^{2})\phi_{\epsilon}(t-t_{2})dxdt
\end{eqnarray*}
By Lebesgue differentiation Theorem, we obtain
\begin{eqnarray*}
     \int_{\mathbb{T}^{3}} (|\textbf{u}(x,t_{1})|^{2}+|Z(x,t_{1})|^{2})dx= \int_{\mathbb{T}^{3}} (|\textbf{u}(x,t_{2})|^{2}+|Z(x,t_{2})|^{2})dx\quad\mbox{for all}\;\; t_{1},t_{2} \in (0,T).
\end{eqnarray*}
So, $||\textbf{u}(t,.)||_{L^{2}}^{2}+ ||Z(t,.)||_{L^{2}}^{2},\;\;  t\in (0,T)$ is conserved. Therefore, under the assumptions outlined in Proposition, we have demonstrated the conservation of energy for weak solutions, corresponding to the analog of the first half of Onsager's conjecture for this model.
\end{proof}
\section{Conclusion}
In this work, we examined $3D$-Leray-$\alpha $ Gaseous stars model. Regarding the conserved quantity $||\textbf{u}||_{L^{2}}^{2}+ ||Z||_{L^{2}}^{2}$ we employed $\textbf{u},Z\in L_{t}^{\infty}(L_{x}^{2})$ as a regularity assumption and $Z\in L^{3}((0,T);B_{3,\infty}^{r}(\mathbb{T}^{3})),\;s,r> 0\;\mbox{and}\; s+2r> 1$ as a Besov assumption.
\section*{Acknowledgements}
The authors thank the anonymous referees for their constructive criticism and suggestions.


\begin{thebibliography}{99}
\bibitem{1}
Bentrcia, T., ; Mennouni, A. (2022). On the asymptotic stability of a Bresse system with two
fractional damping terms: Theoretical and numerical analysis.Discrete Contin. Dyn. Syst. Ser. B, 28, 580–622.
\bibitem{2}
Bentrcia, T.; Mennouni, A. (2023). On the solution behavior of a nonlinear time-fractional
Klein–Gordon equation: Theoretical study and numerical validation. Commun. Nonlinear Sci.
Numer. Simul. 125, 107384.
\bibitem{3}
Bentrcia, T.; Mennouni, A. (2024). On the energy decay of a nonlinear time-fractional Eu-
ler–Bernoulli beam problem including time-delay: theoretical treatment and numerical solu-
tion techniques. J. Eng. Math. 145, (1), 21.
\bibitem{4} 
Otsmane, S.; Mennouni, A. (2024). New contributions to a complex system of quadratic heat
equations with a generalized kernels: global solutions. Monatsh. Math., 1-20.
\bibitem{5}
Donatelli, D.; Trivisa, K. (2008). From the dynamics of gaseous stars to the incompressible Euler equations. Journal of Differential Equations, 245(5), 1356-1385.
\bibitem{6}
Donatelli, D.; Trivisa, K. (2007). A multidimensional model for the combustion of compressible fluids. Archive for rational mechanics and analysis, 185, 379-408.
\bibitem{7}
Duchon, J.; Robert, R. (2000). Inertial energy dissipation for weak solutions of incompressible Euler and Navier-Stokes equations. Nonlinearity, 13(1), 249.
\bibitem{8} 
De Lellis, C. (2020). The Nash–Kuiper Theorem and the Onsager Conjecture. Notices of the International Consortium of Chinese Mathematicians, 8(1), 17-26.
\bibitem{9} 
Boutros, D. W.; Titi, E. S. (2023). Onsager's conjecture for subgrid scale $\alpha$-models of turbulence. Physica D: Nonlinear Phenomena, 443, 133553.
\bibitem{10} 
Constantin, P., E, W.; Titi, E. S. (1994). Onsager's conjecture on the energy conservation for solutions of Euler's equation.
\bibitem{11}Jacob , Bedrossian. ; Vlad ,Vicol.  The Mathematical Analysis of the Incompressible Euler and Navier-stokes Equations .American Mathematical Society. (2022) 
.\bibitem{12}
Lions, J. L. (1969).  Quelques Méthodes de Résolution des Problèmes aux Limites Non-Linéaires,. Dunod.
\bibitem{13}
Shvydkoy, R. (2018). Homogeneous solutions to the 3D Euler system. Transactions of the American Mathematical Society, 370(4), 2517-2535.
\bibitem{14}
Eyink, G. L. (2005). Locality of turbulent cascades. Physica D: Nonlinear Phenomena, 207(1-2), 91-116.
\bibitem{15}
Eyink, G. L. (1994). Energy dissipation without viscosity in ideal hydrodynamics I. Fourier analysis and local energy transfer. Physica D: Nonlinear Phenomena, 78(3-4), 222-240.
\bibitem{16}
Cheskidov, A., Constantin, P., Friedlander, S.,  Shvydkoy, R. (2008). Energy conservation and Onsager's conjecture for the Euler equations. Nonlinearity, 21(6), 1233.
\bibitem{17}
Chepyzhov, V. V., Titi, E. S., \& Vishik, M. I. (2007). On the convergence of solutions of the Leray-alpha model to the trajectory attractor of the 3D Navier-Stokes system. Discrete and Continuous Dynamical Systems, 17(3), 481.
\bibitem{18}
Cao, C., Holm, D. D.; Titi, E. S. (2005). On the Clark–$\alpha$ model of turbulence: global regularity and long-time dynamics. Journal of Turbulence, (6), N20.
\bibitem{19}
Cao, Y., Lunasin, E. M.; Titi, E. S. (2006). Global well-posedness of the three-dimensional viscous and inviscid simplified Bardina turbulence models. Communications in Mathematical Sciences, 4(4), 823-848.
\bibitem{20}
Linshiz, J. S.; Titi, E. S. (2007). Analytical study of certain magnetohydrodynamic-$\alpha$ models. Journal of mathematical physics, 48(6).
 \bibitem{21}
Leoni, G. (2024). A first course in Sobolev spaces (Vol. 181). American Mathematical Society.
\bibitem{22}  N. Masmoudi ,From Vlasov–Poisson system to the incompressible Euler system ,Comm. Partial Differential Equations, 26 (2001) 1913–1928.
\end{thebibliography}
\end{document}